\newcommand{\inlineitem}[1][]{%
	\ifnum\enit@type=\tw@
	{\descriptionlabel{#1}}
	\hspace{\labelsep}%
	\else
	\ifnum\enit@type=\z@
	\refstepcounter{\@listctr}\fi
	\quad\@itemlabel\hspace{\labelsep}%
	\fi} \makeatother
\newcommand{\gd}{\delta}
\newcommand{\gp}{\pi}
\newcommand{\gs}{\sigma}
\newcommand{\Gd}{\Delta}
\newcommand{\Gs}{\Sigma}
\newcommand{\Gom}{\Omega}
\newcommand{\subs}{\subset}
\newcommand{\sbq}{\subseteq}
\newcommand{\bs}{\backslash}
\newcommand{\nin}{\notin}
\newcommand{\mbb}{\mathbb}
\newcommand{\mcl}{\mathcal}
\newcommand{\us}{\underset}
\newcommand{\os}{\overset}
\newcommand{\lra}{\longrightarrow}
\newcommand{\Z}{\mbb Z}
\newcommand{\La}{\Leftarrow}
\newcommand{\es}{\emptyset}
\newcommand{\ora}{\overrightarrow}
\newcommand{\equ}[1]{%
	\begin{equation*}
		#1
	\end{equation*}
}
\newcommand{\equa}[1]{%
	\begin{equation*}
		\begin{aligned}
			#1
		\end{aligned}
	\end{equation*}
}
\DeclareMathOperator{\Det}{Det}
\DeclareMathOperator{\Hom}{Hom}
\newtheorem{theorem}{Theorem}[section]
\newtheorem{prop}[theorem]{Proposition}
\newtheorem{lemma}[theorem]{Lemma}
\theoremstyle{definition}
\newtheorem{defn}[theorem]{Definition}
\newtheorem{example}[theorem]{Example}
\theoremstyle{remark}
\newtheorem{remark}[theorem]{Remark}
\numberwithin{equation}{section}
\def\namedlabel#1#2{\begingroup
	\def\@currentlabel{#2}%
	\label{#1}\endgroup
}
\newtheorem*{thmOmega}{\bf{Theorem} $\bm{\Gom}$}
\begin{document}
\title[On the Reduciblity of a Certain Type of Matroid by a Point]{On the Reduciblity of a Certain Type of Rank $3$ Uniform Oriented Matroid by a Point}
\author[C.P. Anil Kumar]{Author: C.P. Anil Kumar*}
\address{Post Doctoral Fellow in Mathematics, Room No. 223, I Floor, Main Building, Harish-Chandra Research Institute, (Department of Atomic Energy, Government of India),
Chhatnag Road, Jhunsi, Prayagraj (Allahabad)-211019, Uttar Pradesh, INDIA
}
\email{akcp1728@gmail.com}
\thanks{*The work is done when the author is a Post Doctoral Fellow at HRI, Allahabad.}
\subjclass{Primary: 51D20 Secondary: 52C35}
\keywords{Point Arrangements in the Plane, Line Cycles, Two Standard Consecutive Structure, Definite and Indefinite Regions, Matroids, Reducibility of a Matroid by a Point}
\begin{abstract}
For a positive integer $n\geq 3$, the sides and diagonals of a convex $n\operatorname{-}$gon divide the interior of the convex $n\operatorname{-}$gon into finitely (polynomial in $n$) many regions bounded by them. In this article,  we associate to every region a unique $n\operatorname{-}$cycle in the symmetric group $S_n$ of a certain type (defined as $2$-standard consecutive cycle) by studying point arrangements in the plane. Then we find that there are more (exponential in $n$) number of such cycles leading to the conclusion that not every region labelled by a cycle appears in every convex $n\operatorname{-}$gon. In fact most of them do not occur in any given single convex $n\operatorname{-}$gon. Later in the main theorem of this article we characterize combinatorially those cycles (defined as definite cycles) whose corresponding regions occur in every convex $n\operatorname{-}$gon and those cycles (defined as indefinite cycles) whose corresponding regions do not occur in every convex $n\operatorname{-}$gon. As a consequence we characterize those one point extensions of a uniform rank $3$ convex oriented matroid for which the one point extension is reducible by the, one point, when it lies inside the convex hull. 
\end{abstract}
\maketitle
\section{\bf{Introduction}}
The regions of a convex $n\operatorname{-}$gon when divided by the diagonals has been studied in various contexts in the literature (J.W.~Freeman~\cite{MR0412967}, R.~Honsberger~\cite{MR0419117} [Chapter $9$], J.~Herman, R.~Kucera, J.~Simsa~\cite{MR1950450} [Chapter $3$], the OEIS sequence A006522~\cite{OEIS}). The problems on points in general position are treated in Chapter $8$ of  P.~Brass, W.~O.~J.~Moser, J.~Pach~\cite{MR2163782}. Here, in~\cite{MR2163782}, two convex $7$-gons with different diagonal arrangements is given. In this article, we consider certain point arrangements in the plane $\mbb{R}^2$ or certain vector configurations in the space $\mbb{R}^3$, that is, acyclic oriented uniform matroids of rank $3$, where $n$ points are in a convex position in the affine plane $\mbb{R}^2$, with an additional point $p$ contained in the interior of a region of the convex hull of the $n$ points when the hull is divided by its diagonals. The region becomes the {\it residence} of the point $p$, (see L.~M.~Kelly and W.~O.~J.~Moser~\cite{MR0097014}, page 211 for definition of ``{\it residence}"). To study the residence of $p$, we associate, to any region of the convex $n\operatorname{-}$gon when divided by diagonals, a unique 2-standard consecutive cycle. 
The method of associating cycle invariants as a combinatorial model to point arrangements in the plane has already been explored by J.~E.~Goodman and R.~Pollack~\cite{MR0583961},~\cite{MR0769218}.
A similar method is explained in~\cite{MR1723092} [Chapter $10$]. 
Now we mention a couple of definitions to proceed further.
\begin{defn}[Reducibility of an oriented matroid by a point]
Let $M$ denote a rank $d$ oriented matroid on $E=\{1,2,\cdots,n\}$, and let $\widetilde{\mcl{R}(M)}$ denote the set of real $n\times d$ matrices realizing $M$. The realization space $\mcl{R}(M)$ is defined as the quotient of the topological space $\widetilde{\mcl{R}(M)} \subseteq \mbb{R}^{nd}$ by the canonical action of the group $GL(d,\mbb{R})$ of nonsingular $d\times d$ matrices. 
Deleting a point $e\in E$ in $M$ induces a natural map $\gp_e: \mcl{R}(M)\lra \mcl{R}(M\bs e)$. Each fiber of this map is a convex set, namely it is the {\it residence} of the point $e$ in a realization of $M$. We say the oriented matroid $M$ is reducible by a point $e\in E$ provided the deletion map $\gp_e: \mcl{R}(M)\lra \mcl{R}(M\bs e)$ is surjective. In other words, $M$ is reducible by $e$ if every real realization of the minor $M\bs e$ extends to a realization of $M$.
\end{defn}
\begin{defn}[The convex matroid $C_n$ and a one-point extension $D_n$ for $n\geq 3$]
\label{defn:ConvexMatroid}
Let $n\geq 3$ be a positive integer.
Let $C_n$ be the rank 3 uniform oriented matroid consisting of a set $E_n$ of $n$ points where in any realization of $C_n$ there are $n$ points in convex position in the affine plane $\mbb{R}^2$, that is, if $E_n=\{1,2,\cdots,n\}$ then for any $1\leq i<j<k<l\leq n$ and circuit $X=\{i<j<k<l\}\subseteq  E_n$ we have $X^{+}=\{i,k\},X^{-}=\{j,l\}$ or vice-versa with $X=X^+\sqcup X^-$ as the signed circuit. So in a realization matrix $A=(v_1,v_2,\cdots,v_n)\in Mat_{3,n}(\mbb{R})$ of $C_n$ we have $\Det(v_i,v_j,v_k)>0$ for every $1\leq i<j<k\leq l$. It can be shown that this matroid $C_n$ is acyclic, that is, there exists a linear functional $l_A\in \Hom(\mbb{R}^3,\mbb{R})$ such that $l_A(v_i)>0$ for all $1\leq i\leq n$ for any such realization matrix $A$.

Let $D_n=C_n\cup \{n+1\}$ denote the one point extension of the matroid $C_n$ consisting of an additional point $p=n+1$ such that in any realization, the point $p$ is in general position contained in the interior of the convex hull of $n$ points of $C_n$. So if $B=(v_1,v_2,\cdots,v_n,v_{n+1})\in Mat_{3,n+1}(\mbb{R})$ is any realization of $D_n$ then $A=(v_1,v_2,\cdots,v_n)$ is a realization of $C_n$ and we have $\Det(v_i,v_{i+1},v_{n+1})>0$ for any $1\leq i\leq n-1$ and $\Det(v_n,v_1,v_{n+1})>0$.  	
\end{defn}

\begin{defn}[Definite Region and Indefinite Region]
\label{defn:DefiniteRegion}
We say a region/residence $R$ of a point $p$ in a convex $n$-gon when divided by diagonals is a reducible region or definite region if the corresponding matroid $D_n$ is reducible by the point $p$. We say it is an indefinite region if $D_n$ is not reducible by the point $p\in R$.  	
\end{defn}

In J.~Richter and B.~Sturmfels~\cite{MR0994170}, the authors introduce the notion of isolated points for uniform oriented matroids and study the concept of reducibility along with it. In fact they prove in ~\cite{MR0994170} Theorem 3.4, page 397 that, in a rank 3 uniform oriented matroid $M$, any point $p\in M$ is either isolated or $M$ is reducible by $p$. Using this they prove the main result that, if $M$ is a uniform rank 3 oriented matroid with at most eight points then the realization space $\mcl{R}(M)$ of $M$ is contractible, and hence $M$ satisfies the isotopy property (which states that the space $\mcl{R}(M)$ is path-connected). 
For an oriented matroid $M$, an ordering $(e_1,e_2,\cdots,e_n)\in E$ is called a reduction sequence for $M$ if either $d=n$ or $M$ is reducible by $e_n$ and $(e_1,e_2,\cdots,e_{n-1})$ is a reduction sequence for $M\bs e_n$. Now the following holds:

\begin{prop}
\label{prop:OM}
Let $M$ be an oriented matroid. 
\begin{enumerate}
\item (Lemma 2.1~\cite{MR0994170}) Suppose $M$ is reducible by $e$ then $\mcl{R}(M)$ is homotopy equivalent to $\mcl{R}(M\bs e)$. 
\item (Corollary 2.2~\cite{MR0994170}) If $M$ admits a reduction sequence then its realization space $\mcl{R}(M)$ is contractible.
\item (Corollary 3.5~\cite{MR0994170}) If $M$ has six or less points, then $M$ is reducible by each of its points.
\end{enumerate}
\end{prop}

Proposition~\ref{prop:OM} can be applied to the matroid $D_n$ in main Theorem~\ref{theorem:DefIndefRegions} of the article giving rise to some more consequences. 

\subsection{Generic Realization Spaces}
Let $M$ be an oriented matroid of rank $d$ on $E=\{1,2,\cdots,n\}$. Let $\mcl{H}=\{H\mid H\text{ is a hyperplane of the matroid }M, \text{that is, a flat of}$    $\text{rank }d-1\}$. Then we have $\mcl{C}^*=\{E\bs H\mid H\in \mcl{H}\}$ is the set of cocircuits and in an oriented matroid we have the decomposition $E\bs H=(E\bs H)^+\sqcup(E\bs H)^-$ as a signed cocircuit representing the two sides of the hyperplane $H$. In a realization matrix $A\in Mat_{d,n}(\mbb{R})$ of the matroid $M$, let $H_A$ denote the realization hyperplane (a subspace of $\mbb{R}^d$ of dimension $d-1$) corresponding to a hyperplane $H$ of the matroid $M$. 
Let $\widetilde{\mcl{R}_{gen}(M)}$ denote the set of real $(n\times d)$-matrices $A\in Mat_{d,n}(\mbb{R})$ realizing $M$ for which the hyperplanes of the matroid $M$ form a hyperplane arrangement which satisfy the following intersection property.

If $H_1,H_2,\cdots H_r$ are $r$ distinct hyperplanes of the matroid and $H_1\cap H_2\cap \cdots \cap H_r$ is a flat of rank $k\geq 0$ then we must have $(H_1)_A\cap (H_2)_A\cap \cdots (H_r)_A$ is of vector subspace in $\mbb{R}^d$ of dimension $\max(k,d-r)$.
The generic realization space $\mcl{R}_{gen}(M)$ is defined as the quotient of the topological space $\widetilde{\mcl{R}_{gen}(M)} \subseteq \widetilde{\mcl{R}(M)} \subseteq \mbb{R}^{nd}$ by the canonical action of the group $GL(d,\mbb{R})$ of nonsingular $d\times d$ matrices.

\begin{example}
Let $C_n$ be the convex matroid given in Definition~\ref{defn:ConvexMatroid}. Then $\mcl{R}_{gen}(M)$ consists of those realizations of $C_n$ where the diagonals of the convex $n$-gon have the property that, no three diagonals have a common point of intersection.   
\end{example}

We find that for any $n$, there are $2^{n-1}-n$ (exponential in $n$) $2$-standard consecutive cycles where as the number of regions in a convex $n\operatorname{-}$gon in which no three diagonals are concurrent at an interior point of the polygon is given by $\frac{(n-1)(n-2)(n^2-3n+12)}{24}$ which is a polynomial in $n$. Hence we conclude that not every region occurs in every convex $n\operatorname{-}$gon. This leads to definite  cycles/regions (those which occur always) and indefinite cycles/regions (those which do not occur always). We see that definite regions are residences of a point $p=n+1$ such that the matroid $D_n$ is reducible by $p$ and indefinite regions are those such that the matroid $D_n$ is not reducible by $p$. It happens that, for $n\leq 5$ all regions are definite. This agrees with Proposition~\ref{prop:OM}(3) because $n+1\leq 6$. From $n=6$ the indefinite regions start to appear. However for $n\geq 8$ both the types can be combinatorial characterized.
Main Theorem~\ref{theorem:DefIndefRegions} and Theorem~\ref{theorem:Reducibility} in this article characterizes those regions or residences of the point $p$ for which, the matroid $D_n$ is reducible by the point $p$. Main Theorem~\ref{theorem:DefIndefRegions} is stated in Section~\ref{sec:DefIndefRegions} after the required definitions and motivation for these definitions. Theorem~\ref{theorem:Reducibility} is stated in the last section. 

\section{\bf{The Number of Regions in a Convex $n\operatorname{-}$gon with Generic Diagonals}}
In this section, we consider the regions of a convex $n\operatorname{-}$gon which has generic diagonals and compute the number of regions formed by the diagonals and sides.
\begin{defn}[Point Arrangement in the Plane]
	Let $\mcl{P}_n=\{P_1,P_2,\ldots,$ $P_n\}$ be a finite set of $n$ points in the plane $\mbb{R}^2$. Then $\mcl{P}_n$ is said to be a point arrangement in the plane if no three points are collinear.
\end{defn}
\begin{defn}[Side and Diagonal of a Convex $n\operatorname{-}$gon]
Let $\mcl{P}_n=\{P_1,\ldots,P_n\}$ be a point arrangements in $\mbb{R}^2$ such that the points form a convex $n\operatorname{-}$gon in the anticlockwise manner $P_1\lra P_2\lra \ldots \lra P_n\lra P_1$. A side of the convex $n\operatorname{-}$gon is a line $P_iP_j$ with $j\equiv i\pm 1 \mod n$ where $1\leq i\neq j\leq n$. A diagonal of the convex $n\operatorname{-}$gon is a line $P_iP_j$ with $j\not\equiv i\pm 1\mod n$ where $1\leq i\neq j\leq n$. 
\end{defn}
\begin{defn}[Convex $n\operatorname{-}$gon with Generic Diagonals]
Let $\mcl{P}_n=\{P_1,\ldots,P_n\}$ be a point arrangement in $\mbb{R}^2$ such that the points form a convex $n\operatorname{-}$gon in the anticlockwise manner $P_1\lra P_2\lra \ldots \lra P_n\lra P_1$. We say the convex $n\operatorname{-}$gon has generic diagonals if for every three pairs of subscripts $\{i_t,j_t\},1\leq i_t\neq j_t\leq n,1\leq t\leq 3$ with
\equ{\{i_1,j_1\}\cap\{i_2,j_2\}\cap\{i_3,j_3\}=\es,}
$P_{i_1}P_{j_1},P_{i_2}P_{j_2},P_{i_3}P_{j_3}$ do not concur in the plane $\mbb{R}^2$.
\end{defn}
\begin{remark}[A Region]
A region is defined to be a connected component of the interior of the convex $n\operatorname{-}$gon when the diagonals and sides are removed.
\end{remark}
Now we state the theorem.
\begin{theorem}[Chapter $9$, pp. 99-107 in R.~Honsberger~\cite{MR0419117}, J.~W.~Freeman~\cite{MR0412967}]
\label{theorem:NumberofRegions}
Let $\mcl{P}_n=\{P_1,\ldots,P_n\}$ be a point arrangement in $\mbb{R}^2$ such that the points form a convex $n\operatorname{-}$gon in the anticlockwise manner $P_1\lra P_2\lra \ldots \lra P_n\lra P_1$ and which has generic diagonals. Then the number of regions formed by the convex $n\operatorname{-}$gon is given by 
\equ{\frac{(n-1)(n-2)(n^2-3n+12)}{24}.}
\end{theorem}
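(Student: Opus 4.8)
The plan is to realize the union of all sides and diagonals as a planar graph $G$ and apply Euler's formula. First I would let the vertex set of $G$ consist of the $n$ polygon vertices together with all interior intersection points of diagonals, and let the edges of $G$ be the maximal segments of the sides and diagonals that contain no vertex of $G$ in their interior. Since the boundary polygon is connected and every interior intersection point lies on a diagonal joining two boundary vertices, $G$ is connected, so Euler's formula $V - E + F = 2$ applies; the quantity we want is the number of interior regions, which is $F - 1$ (all faces except the unbounded outer face).

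Next I would count $V$ and $E$ separately. For the vertices, the crucial observation---and the only place the generic hypothesis enters---is that each interior intersection point is the crossing of exactly two diagonals, and two diagonals cross in the interior precisely when their four endpoints alternate around the polygon. Because any four of the $n$ vertices of a convex polygon are in convex position, each $4$-subset $\{P_a,P_b,P_c,P_d\}$ determines exactly one such crossing, namely the intersection of the two ``diagonals'' of the quadrilateral they span; the hypothesis that no three diagonals are concurrent guarantees that these $\binom{n}{4}$ crossings are pairwise distinct. Hence $V = n + \binom{n}{4}$.

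For the edges I would count by chords. There are $\binom{n}{2}$ chords in all (the $n$ sides together with the $\binom{n}{2}-n$ diagonals), and a chord carrying $m$ interior intersection points in its relative interior is cut into $m+1$ edges. Summing $m_c+1$ over all chords $c$ gives $\binom{n}{2} + \sum_c m_c$; since every interior intersection point lies on exactly two chords, the incidence count yields $\sum_c m_c = 2\binom{n}{4}$, so $E = \binom{n}{2} + 2\binom{n}{4}$.

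Then Euler's formula gives $F = 2 - V + E = 2 - n + \binom{n}{2} + \binom{n}{4}$, so the number of interior regions is $F - 1 = 1 - n + \binom{n}{2} + \binom{n}{4}$. The final step is a routine simplification: substituting $\binom{n}{4} = \frac{n(n-1)(n-2)(n-3)}{24}$ and $\binom{n}{2} = \frac{n(n-1)}{2}$ and collecting over the common denominator $24$ produces the numerator $n^4 - 6n^3 + 23n^2 - 42n + 24$, which factors as $(n-1)(n-2)(n^2-3n+12)$; this is the claimed value. (As a sanity check, the expression $1 - n + \binom{n}{2} + \binom{n}{4}$ returns $1, 4, 11$ for $n = 3, 4, 5$, matching the isomorphism classes discussed earlier.) I expect the main obstacle to be the vertex count: one must argue cleanly that interior crossings biject with $4$-subsets of the vertices, which is exactly where the no-three-diagonals-concurrent hypothesis is indispensable, and one must also confirm connectivity of $G$ before invoking Euler's formula. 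The edge count and the concluding algebra are then straightforward.
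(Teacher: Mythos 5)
Your proof is correct, but it cannot be compared line-by-line with the paper's, because the paper offers no argument at all for this theorem: its ``proof'' is a pointer to R.~Honsberger's \emph{Mathematical Gems I} (chapter 9) and to J.~W.~Freeman's article. What you have written is essentially the classical argument that those references contain, worked out in full: realize the sides and diagonals as a connected planar graph, count $V = n + \binom{n}{4}$ using the bijection between interior crossings and $4$-subsets of vertices (this is exactly where genericity is needed, as you say), count $E = \binom{n}{2} + 2\binom{n}{4}$ by the chord-subdivision/incidence argument, and apply Euler's formula to get $1 - n + \binom{n}{2} + \binom{n}{4}$ interior regions, which simplifies to the stated polynomial. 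All of these steps are sound: the graph is connected because every interior vertex lies on a chord terminating at boundary vertices; sides of the polygon carry no interior crossings, so the incidence sum $\sum_c m_c = 2\binom{n}{4}$ is exact; and the algebra checks out, since $24(1-n) + 12n(n-1) + n(n-1)(n-2)(n-3) = n^4 - 6n^3 + 23n^2 - 42n + 24 = (n-1)(n-2)(n^2 - 3n + 12)$. So your proposal buys self-containedness, which the paper lacks. One small slip: your sanity-check values $1, 4, 11$ for $n = 3, 4, 5$ are the region counts, not the numbers of isomorphism classes of point arrangements discussed in the paper's Figure~\ref{fig:MinusOne} (those are $1, 2, 3$); the numerical check itself is right, only its description is off.
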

\section{\bf{On the $2\operatorname{-}$standard consecutive structure of an $n\operatorname{-}$cycle and the Regions of a Convex $n\operatorname{-}$gon}}
In this section, for a positive integer $i$, we define $i\operatorname{-}$standard consecutive structure on an $n\operatorname{-}$cycle. We associate to a region in a convex $n\operatorname{-}$gon formed by the diagonals, a $2\operatorname{-}$standard consecutive cycle. 

Now we introduce a structure on a permutation as follows.
\begin{defn}[$i\operatorname{-}$Standard Cycle]
	We say an $n\operatorname{-}$cycle $(1=a_1a_2\ldots a_n)$ is an $i\operatorname{-}$standard cycle if there exists a way to write
	the integers $a_i,i=1,\ldots,n$ as $i$ sequences of inequalities as follows:
	\equa{& a_{11}<a_{12}<\ldots<a_{1j_1}\\
		& a_{21}<a_{22}<\ldots<a_{2j_2}\\
		& a_{31}<a_{32}<\ldots<a_{3j_3}\\
		&\vdots \\
		&a_{i1}<a_{i2}<\ldots<a_{ij_i}}
	where $\{a_{st}\mid 1\leq s\leq i,1\leq t\leq j_s\}=\{a_1,a_2,\ldots,a_n\}=\{1,2,\ldots,n\},
	j_1+j_2+\ldots+j_i=n$ and $i$ is minimal, that is, there exists no smaller integer with such property
	and furthermore that $a_{s(t+1)}$ occurs to the right of $a_{st}$ for every $1\leq s\leq i$ and
	$1\leq t \leq j_s-1$ in this cycle arrangement $(a_1=1,a_2,\ldots,a_n)$.
\end{defn}
Define $i\operatorname{-}$standard consecutive structure on an $n\operatorname{-}$cycle as follows.
\begin{defn}[$i\operatorname{-}$Standard Consecutive Cycle]
	\label{defn:2StandardConsecutiveStructure}
	We say an $n\operatorname{-}$cycle $(1=a_1a_2\ldots a_n)$ is a consecutive $i\operatorname{-}$standard cycle 
	or a $i\operatorname{-}$standard consecutive cycle if we have
	\equ{a_{s1}<a_{s2}<\ldots<a_{sj_s}}
	and in addition $a_{st}=a_{s1}+(t-1),1\leq t\leq j_s,1\leq s\leq i$
	where $\{a_{st}\mid 1\leq s\leq i,1\leq t\leq j_s\}=\{a_1,a_2,\ldots,a_n\}=\{1,2,\ldots,n\},
	j_1+\ldots+j_s=n$ and $a_{s(t+1)}$ occurs to the right of $a_{st}$ for every $s=1,\ldots,i$ and
	$1\leq t \leq j_s-1$ in this cycle arrangement $(1=a_1a_2\ldots a_n)$
	and $i$ is minimal, that is, there exists no smaller integer with such property.
	If the minimal value of $i$ is two then we say that the cycle has $2\operatorname{-}$standard consecutive structure.
\end{defn}
\begin{example}
	The $5\operatorname{-}$cycle $(14523)$ it is a $2\operatorname{-}$standard consecutive cycle. 
	However it has the following two $2\operatorname{-}$standard structures.
	\begin{itemize}
		\item $1<4<5,2<3$ (not consecutive).
		\item $1<2<3,4<5$ (consecutive).
	\end{itemize}
\end{example}

\begin{defn}[Line Cycle at a Point]
	Let $\mcl{P}_n=\{P_1,P_2,\ldots,P_n\}$ be a point arrangement in the plane. Fix $P_i \in \mcl{P}_n$ for some $1\leq i\leq n$. Consider the $(n-1)$ lines $L^i_j$ joining $P_i,P_j$ for $1\leq j\leq n,j\neq i$. An anticlockwise traversal around the point $P_i$ cuts the lines $L^i_j$ in a cycle $\gs_i\in S_{n-1}$, a symmetric group over the elements $\{1,2,\ldots,i-1,i+1,\ldots,n\}$. This cycle is defined to be the line cycle at $P_i$ for the arrangement $\mcl{P}_n$.
\end{defn}
The following result enables us to identify regions by their $2$-standard consecutive cycles.
\begin{theorem}
	\label{theorem:LineCycleofaRegion}
Let $\mcl{P}_n=\{P_1,\ldots,P_n\}$ be a point arrangement in $\mbb{R}^2$ such that the points form a convex $n\operatorname{-}$gon in the anticlockwise manner $P_1\lra P_2\lra \ldots \lra P_n\lra P_1$ and which has generic diagonals. Let $R$ be a region and $P_{n+1}\in R$.
Then the line cycle of $P_{n+1}$ for the point arrangement $\mcl{P}_n\cup\{P_{n+1}\}$ is a $2$-standard consecutive structure and the cycle is independent of any point $P_{n+1}\in R$ and 
depends only on the region $R$. Moreover different regions of the convex $n\operatorname{-}$gon are associated to different cycles.
\end{theorem}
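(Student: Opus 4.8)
The plan is to translate every assertion into a statement about the angles of the rays from $P_{n+1}$ to the vertices, and to control the passage from rays to lines by ``folding'' the direction circle modulo $\pi$. First I would record the point cycle: since $P_1\lra P_2\lra\ldots\lra P_n$ is convex and $P_{n+1}$ lies in the interior of the hull, the rays from $P_{n+1}$ to the vertices occur anticlockwise in exactly the order $1,2,\ldots,n$, so the point cycle of $P_{n+1}$ is the standard cycle $(1,2,\ldots,n)$. Writing $\theta_j$ for the direction of the ray to $P_j$ and normalising so that $\theta_1=0$, we get $0=\theta_1<\theta_2<\cdots<\theta_n<2\pi$, and the line cycle is read off by sorting the folded directions $\psi_j:=\theta_j\bmod\pi$, since a line is determined by its direction only up to $\pi$.

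For the two-standard claim I would read the line cycle starting from vertex $1$; because $\psi_1=0$ is the minimum, $1$ occupies the first position and the position of any $j$ is the rank of $\psi_j$. Thus there is a descent at $j$ (that is, $j+1$ precedes $j$) exactly when $\psi_{j+1}<\psi_j$. Since $P_{n+1}$ is interior, no gap $\theta_{j+1}-\theta_j$ can be $\geq\pi$, as otherwise all vertices would lie in a closed half-plane through $P_{n+1}$; hence $\psi_{j+1}>\psi_j$ whenever $\theta_j,\theta_{j+1}$ lie on the same side of $\pi$, and $\psi_{j+1}<\psi_j$ precisely at the unique index $m$ at which the increasing sequence crosses $\pi$ (unique because $\theta_1=0<\pi<\theta_n$). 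So there is exactly one descent; the two consecutive runs are $\{1,\ldots,m\}$ and $\{m+1,\ldots,n\}$, and since the line cycle differs from the point cycle by Theorem~\ref{theorem:PointLineCyle} it is not the identity, so the minimal $i$ of Theorem~\ref{theorem:iStandardConsecutiveCycle} equals $2$.

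Independence of the chosen $P_{n+1}\in R$ is then easy: the cyclic order of the $\psi_j$ can change only when two of them coincide, and $\psi_j=\psi_k$ means $L_j=L_k$, i.e. $P_j,P_{n+1},P_k$ are collinear, i.e. $P_{n+1}$ lies on the chord $P_jP_k$. As $P_{n+1}$ ranges over the open region $R$ it meets no side or diagonal, so the order of the $\psi_j$, and hence the line cycle, is constant on $R$. The same computation shows that crossing a single chord makes $\psi_j,\psi_k$ coincide and swap, so neighbouring regions already receive distinct cycles.

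The hard part is full injectivity, because the bare cyclic order does \emph{not} by itself record on which side of a chord $P_jP_k$ the point lies: the relative folded position of $\psi_j$ and $\psi_k$ is the same on both sides, and one must instead use the whole configuration. The plan here is to note that the side of $P_jP_k$ is the orientation of the triangle $P_jP_kP_{n+1}$, which is the sign of $\sin(\theta_k-\theta_j)$, so it is governed by the number $g(j)$ of rays lying in the open half-turn $(\theta_j,\theta_j+\pi)$: the point lies on the positive side iff $k\in\{j+1,\ldots,j+g(j)\}$ cyclically. Rotating the labels so that $j$ plays the role of $1$ and rerunning the descent argument recovers $g(j)=m_j-1$ from the length $m_j$ of the first run, and that rotated cycle is itself determined by the line cycle. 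Hence the line cycle determines every $g(j)$, therefore the side of every chord, therefore the region, so distinct regions carry distinct cycles. I expect the reconstruction of the $g(j)$ (and the observation that sidedness with respect to all chords pins down the region) to be the step requiring the most care.
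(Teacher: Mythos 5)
Your proposal is correct, and the interesting divergence from the paper is in the last assertion (distinct regions get distinct cycles). For the first two assertions your folding argument (directions $\theta_j$ modulo $\pi$, all gaps $<\pi$ for an interior point, hence exactly one descent, plus Theorem~\ref{theorem:PointLineCyle} to rule out the identity cycle) and your collinearity/connectedness argument are rigorous expansions of what the paper simply asserts in a sentence each. For injectivity, however, the paper argues comparatively: given $R\neq S$ it claims there are vertices $P_i,P_j,P_k$, $i<j<k$, with $\Delta P_iP_jP_k$ containing $R$ but not $S$, and then reads containment off the cycle as the sub-cycle $(ikj)$ versus $(ijk)$ (the criterion later recorded as Theorem~\ref{theorem:PropertiesofCycles}(6)); the existence of the separating triangle is left unjustified, and filling that in needs a chord separating $R$ from $S$ together with a Carath\'eodory-type step showing a cell lies inside some vertex triangle on its side of the chord. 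You instead reconstruct the region outright from its cycle: reading the cycle starting at each $j$, relabelling, and using the uniqueness in Theorem~\ref{theorem:iStandardConsecutiveCycle} to extract the split point $m_j$ gives the half-turn counts $g(j)=m_j-1$, hence the sign of $\sin(\theta_k-\theta_j)$ for every pair, hence the side of every chord; and the sign vector pins down the region because the set of interior points with a fixed sign vector is an intersection of open half-planes with the polygon's interior, so convex, so connected, so contained in a single region. Your route proves something slightly stronger (the region is recoverable from its cycle, not merely distinguished from any other), and the separation fact it rests on (distinct cells are strictly separated by some chord) is more elementary than the paper's unproved triangle-separation claim; the cost is that you must apply the uniqueness theorem to every rotated reading, and you should say explicitly that each rotated reading cannot be the identity cycle (again by Theorem~\ref{theorem:PointLineCyle}, since $P_{n+1}$ is interior), so that $m_j$ is well defined for every $j$. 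With that remark added, both proofs are sound, and yours is self-contained where the paper's has a gap.
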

\begin{proof}
Since the point $P_{n+1}\in R$ and does not lie on the diagonals, the finite set $\mcl{P}_n\cup\{P_{n+1}\}$ is a point arrangement. An anticlockwise traversal around the point $P_{n+1}$ gives a cycle $(1=a_1a_2\ldots a_n)$ which has the property that it is obtained by interlacing in some manner two sequences $1<2<\ldots<i$ and $i+1<i+2<\ldots<n$ for some $2<i<n$ giving rise to a $2$-standard consecutive cycle. It is clear that the line cycle depends only on the region $R$. If $R$ and $S$ are two different regions then there exist three subscripts $i<j<k$ such that $\Gd P_iP_jP_k$ contains $R$ and 
$\Gd P_iP_jP_k$ does not contain $S$. In the cycle associated to $R$ the subscripts $i,j,k$ appear as the sub-cycle $(ikj)$ and in the cycle associated to $S$ the subscripts $i,j,k$ appear as the sub-cycle $(ijk)$ and clearly they are different.
\end{proof}
\begin{defn}[$2$-Standard Consecutive Cycle of a Region]
	Let $\mcl{P}_n=\{P_1,\ldots,$ $P_n\}$ be a point arrangement in $\mbb{R}^2$ such that the points form a convex $n\operatorname{-}$gon in the anticlockwise manner $P_1\lra P_2\lra \ldots \lra P_n\lra P_1$ and which has generic diagonals. Let $R$ be a region. The line cycle associated to any point $P\in R$ is defined to be the $2$-standard consecutive cycle of the region $R$.
	Using Theorem~\ref{theorem:LineCycleofaRegion}, the cycle is well defined, unique and has the $2$-standard consecutive structure.
\end{defn}
\begin{defn}[Isomorphism of Point Arrangements]
	Let $\mcl{P}^j_n=\{P^j_1,P^j_2,\ldots,$ $P^j_n\},j=1,2$ be two point arrangements in the plane $\mbb{R}^2$. Then a bijection $\gd:\mcl{P}^1_n\lra \mcl{P}^2_n$ is an isomorphism if for any four points $A,B,C,D\in \mcl{P}^1_n$, $D$ is in the interior of the triangle formed by $A,B,C$ if and only if $\gd(D)$ is in the interior of the triangle formed by $\gd(A),\gd(B),\gd(C)$.
	
	We say the isomorphism $\gd$ is an orientation preserving isomorphism if for any three points 
	the $A,B,C \in \mcl{P}^1_n$ the orientation $A\lra B \lra C \lra A$ of the triangle $\Gd ABC$ and the orientation $\gd(A) \lra \gd(B) \lra \gd(C) \lra \gd(A)$ of the triangle  $\Gd \gd(A)\gd(B)\gd(C)$ agree. In~\cite{MR2163782}, this is defined as an equivalence relation for order types of point arrangements. In this case $\mcl{P}^1_n$ and $\mcl{P}^2_n$ is said to have the same order type.
	
	We say the isomorphism $\gd$ is an orientation reversing isomorphism if for any three points 
	the $A,B,C \in \mcl{P}^1_n$ the orientation $A\lra B \lra C \lra A$ of the triangle $\Gd ABC$ and the orientation $\gd(A) \lra \gd(B) \lra \gd(C) \lra \gd(A)$ of the triangle  $\Gd \gd(A)\gd(B)\gd(C)$ disagree. 
\end{defn}
\begin{figure}[h]
	\centering
	\includegraphics[width = 1.0\textwidth]{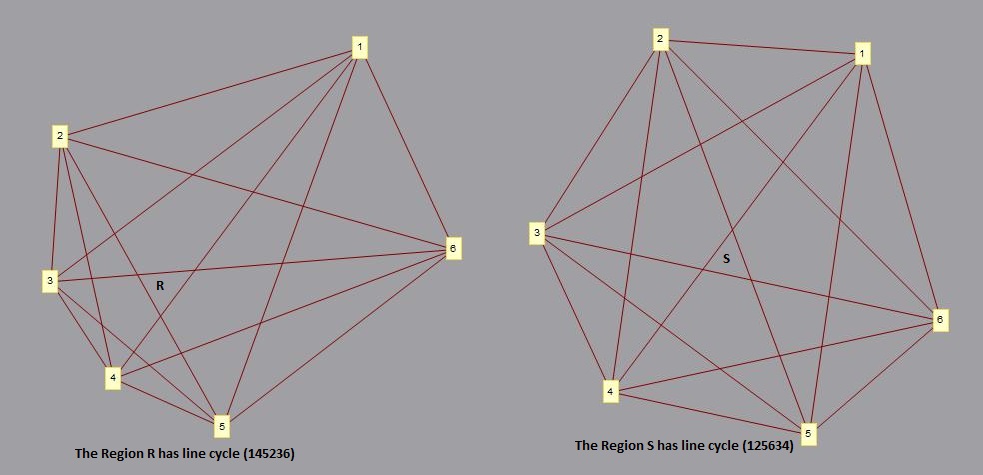}
	\caption{Two non-isomorphic regions $R$ and $S$ in hexagons}
	\label{fig:One}
\end{figure}
\begin{defn}[Isomorphism between two regions]
~\\
	For $j=1,2$, let $\mcl{P}^j_n=\{P_1^j,\ldots,P_n^j\}$ be two point arrangements in $\mbb{R}^2$ such that the points form a convex $n\operatorname{-}$gon in the anticlockwise manner $P^j_1\lra P^j_2\lra \ldots \lra P^j_n\lra P^j_1$ and both of which has generic diagonals. Let $R^j$ be a region in $\mcl{P}^j_n,j=1,2$. We say $R^1$ is isomorphic to $R^2$
	if the line cycle of the regions $R^1$ and $R^2$ are same, that is, for any two points 
	$P_{n+1}^j\in R^j,j=1,2$ the map $\gd:\mcl{P}^1_n\cup\{P^1_{n+1}\}\lra \mcl{P}^2_n\cup\{P^2_{n+1}\}$ given by $\gd(P_i^1)=P_i^2,1\leq i\leq n+1$ is an orientation preserving isomorphism of point arrangements.
\end{defn}

\begin{example}
In Figure~\emph{\ref{fig:One}}, $R$ and $S$ are two non-isomorphic regions of the two hexagons with line cycles $(145236),(125634)$ respectively.
	
\end{example}
Later in Theorem~\ref{theorem:PropertiesofCycles} we will observe that if two regions $R$ and $S$ of two convex $n\operatorname{-}$gons respectively are isomorphic then the regions $R$ and $S$ still need not have the same number of sides. The number of sides can vary.

Now we enumerate the $2$-standard consecutive cycles.
\begin{lemma}
	\label{lemma:NumberofTwoStandardConsecutivecycles}
	Let $T_n\subs S_n$ be the set of $2\operatorname{-}$standard consecutive $n\operatorname{-}$cycles in $S_n$.
	\begin{enumerate}
		\item We have \equ{\#(T_n)=2^{n-1}-n.}
		\item The number of non-isomorphic regions $R$ in a convex $n\operatorname{-}$gon is also $2^{n-1}-n$.   
	\end{enumerate}
\end{lemma}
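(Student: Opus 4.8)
The plan is to prove the two parts separately: part (1) by a direct bijective count, and part (2) by exhibiting a bijection between isomorphism classes of regions and the set $T_n$, so that the count of part (1) transfers. For part (1), I would represent each $n\operatorname{-}$cycle uniquely in the normalized word $(a_1=1,a_2,\ldots,a_n)$ and call an index $k\in\{1,\ldots,n-1\}$ a \emph{break} if $k+1$ occurs to the left of $k$ in this word. By the greedy construction in the proof of Theorem~\ref{theorem:iStandardConsecutiveCycle}, a new consecutive block is opened exactly at each break, so the minimal number $i$ of rows in the consecutive structure equals $1$ plus the number of breaks. Hence $T_n$ is precisely the set of $n\operatorname{-}$cycles having exactly one break.

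Next I would partition $T_n=\bigsqcup_{m=1}^{n-1}B_m$, where $B_m$ consists of the cycles whose unique break sits at $k=m$. A cycle lies in $B_m$ iff the values $\{1,\ldots,m\}$ appear in increasing order and the values $\{m+1,\ldots,n\}$ appear in increasing order in the word, i.e.\ the word is a shuffle of these two increasing blocks with $a_1=1$; such shuffles are counted by choosing the positions of the remaining $m-1$ lower values among the last $n-1$ slots, giving $\binom{n-1}{m-1}$. Exactly one of them (the identity word $1,2,\ldots,n$) has $m+1$ after $m$ and therefore no break at $m$, so $\#B_m=\binom{n-1}{m-1}-1$. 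Summing yields $\#(T_n)=\sum_{m=1}^{n-1}\bigl(\binom{n-1}{m-1}-1\bigr)=(2^{n-1}-1)-(n-1)=2^{n-1}-n$.

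For part (2), Theorem~\ref{theorem:LineCycleofaRegion} assigns to each region its line cycle in $T_n$, and by the definition of isomorphism of regions two regions are isomorphic exactly when these cycles coincide; so the assignment induces an injection from isomorphism classes of regions into $T_n$. It remains to prove surjectivity: every $\gs\in T_n$ is the line cycle of a region in some convex $n\operatorname{-}$gon. Here I would place a point $Q$ at the origin and note that, since each line $QP_j$ is met at the two directions $\theta_j$ and $\theta_j+\pi$ during an anticlockwise sweep, the line cycle of $Q$ records the lines in the cyclic order of their directions modulo $\pi$. Writing $\gs$ in its unique $2\operatorname{-}$standard consecutive form with split $m$, I would choose ray directions $\theta_1<\cdots<\theta_m$ in $(0,\pi)$ and $\theta_{m+1}<\cdots<\theta_n$ in $(\pi,2\pi)$, with $\theta_1$ near $0$, so that the increasing merge of $\theta_1,\ldots,\theta_m$ with $\theta_{m+1}-\pi,\ldots,\theta_n-\pi$ reproduces the prescribed interleaving of the two blocks.

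Placing $P_j$ on a circle centered at $Q$ at angle $\theta_j$ makes $P_1\lra\cdots\lra P_n$ a convex $n\operatorname{-}$gon traversed anticlockwise with $Q$ interior, and by construction the line cycle of $Q$ is $\gs$. A final small generic perturbation of the vertices makes the diagonals generic and moves $Q$ off every side and diagonal while preserving the open condition that fixes the cyclic order of the directions modulo $\pi$; then $Q$ lies in a genuine region whose cycle is $\gs$. This establishes the bijection and hence that the number of non-isomorphic regions equals $\#(T_n)=2^{n-1}-n$. I expect the realization step — checking that the folding $\theta\mapsto\theta\bmod\pi$ converts the chosen angular data into exactly the prescribed $2\operatorname{-}$standard consecutive shuffle, and that genericity can be arranged without disturbing the line cycle — to be the main obstacle, whereas part (1) is routine once the break characterization is in place.
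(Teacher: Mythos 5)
Your proposal is correct and takes essentially the same route as the paper: part (1) is the identical count of shuffles of the two consecutive increasing blocks, $\binom{n-1}{m-1}-1$ for each split $m$, summed to give $2^{n-1}-n$, and part (2) realizes each cycle in $T_n$ as the line cycle of an interior point by prescribing the angular order (mod $\pi$) of the lines joining that point to the polygon's vertices, which is exactly the construction the paper sketches via Figure~\ref{fig:Two}. Your circle-plus-small-perturbation realization and your explicit injectivity step (isomorphism classes of regions correspond bijectively to their cycles) are just tighter write-ups of what the paper does by choosing vertices sequentially on rays through the point and asserting convexity and genericity.
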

\begin{proof}
	This proof of $(1)$ follows by counting the cardinality of $T_n$.	
	If the $2\operatorname{-}$standard consecutive structure is given by 
	\begin{itemize}
		\item $1<2<3<\ldots<j$,
		\item $j+1<j+2<\ldots<n$,
	\end{itemize}
	then the number of such cycles is given by $\binom{n-1}{j-1}-1$. 
	Hence the total number is given by \equ{\us{i=2}{\os{n-1}{\sum}}\bigg(\binom{n-1}{i-1}-1\bigg)=\us{i=0}{\os{n-1}{\sum}}\bigg(\binom{n-1}{i}-1\bigg)=2^{n-1}-n.}
\begin{figure}[h]
	\centering
	\includegraphics[width = 1.0\textwidth]{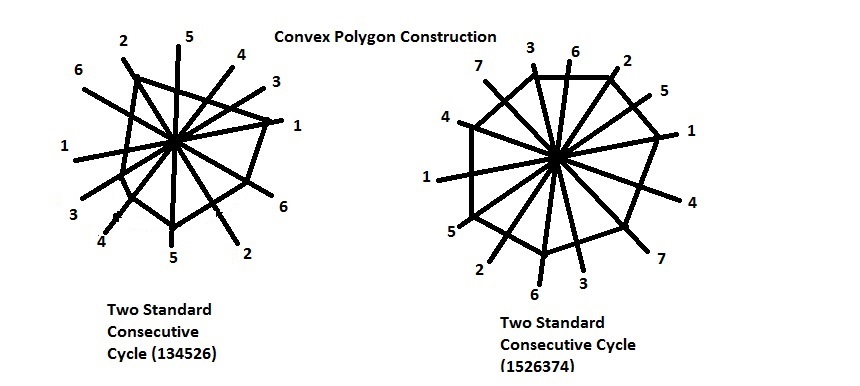}
	\caption{Two examples for $n=6,7$ and cycles $(134526),(1526374)$ respectively}
	\label{fig:Two}
\end{figure}	
	Now we construct for every given $2$-standard consecutive cycle, a convex $n\operatorname{-}$gon and a region $R$ inside it which has the given $2$-standard consecutive cycle. Let $(1=a_1a_2\ldots a_n)$ be a given $2$-standard consecutive cycle. The construction is done as follows. Choose a point $P$ in the plane and draw $n$ lines $L_{a_1},L_{a_2},\ldots,L_{a_n}$ passing through $P$ with increasing angles for $L_{a_i}$ with respect to $L_1=L_{a_1}$ by assuming $L_1$ is the $X\operatorname{-}$axis. Let $a_{i_j}=j,1\leq j\leq n$.
	Choose a point $P_1$ on the positive $X\operatorname{-}$axis. 
	Now traverse anticlockwise around the point $P$ cutting the lines $L_{a_2},L_{a_3}\ldots,L_{a_{(i_2-1)}}$ and choose a point $P_2$ on the ray that we have reached on $L_{a_{i_2}}=L_2$ and continue this process till we choose a point $P_n$ on a suitable ray of $L_{a_{i_n}}=L_n$. In this process we make sure that the we obtain a convex $n\operatorname{-}$gon $P_1\lra P_2\lra \ldots \lra P_n\lra P_1$ in the anticlockwise manner with generic diagonals. We refer to Figure~\ref{fig:Two} for two illustrative examples. This proves the lemma.	
\end{proof}

\section{\bf{Definite and Indefinite Cycles and their Characterization}}
\label{sec:DefIndefRegions}
We have seen in Theorem~\ref{theorem:NumberofRegions} that there are polynomial (in $n$) number of regions in any convex $n\operatorname{-}$gon with generic diagonals. However in Lemma~\ref{lemma:NumberofTwoStandardConsecutivecycles} we have seen that there are exponential (in $n$) number of $2\operatorname{-}$standard consecutive cycles. So we conclude that not every region labeled by $2\operatorname{-}$standard consecutive cycle occurs in every convex $n\operatorname{-}$gon with generic diagonals. This motivates the following definition.

\begin{defn}[Definite and Indefinite $2$-Standard Consecutive $n\operatorname{-}$cycle]
We say a $2$-standard consecutive $n\operatorname{-}$cycle $(1=a_1a_2\ldots a_n)$ is definite
if there is a corresponding region $R$ which occurs in every convex $n\operatorname{-}$gon with generic diagonals. Otherwise, we say the cycle is indefinite.	
\end{defn}
\begin{remark}
In the above definition, we have defined, definite cycles for convex $n$-gon with generic diagonals. If $\mcl{C}_n$ is the class of all convex $n$-gons in the plane and $(\mcl{C}_n)_{gen}$ is the class of all convex $n$-gons with generic diagonals then we will see in Theorem~\ref{theorem:Reducibility} that the set of definite cycles remains the same for both the classes of convex $n$-gons. Those which are not definte are the indefinite cycles. The combinatorial descriptions of both types of cycles are given in Theorem~\ref{theorem:DefIndefRegions}.
\end{remark}

\begin{example}
All regions labelled by $2$-standard consecutive cycles of a triangle, quadrilateral and a pentagon are definite. The first occurence of indefinite regions is when $n=6$. We refer to Figure~\emph{\ref{fig:One}}. The indefinite $2$-standard consecutive cycles are given by 
\equ{(145236),(125634).}
\end{example}

\subsection{\bf{Some Properties of $2$-Standard Consecutive Cycles Associated to Regions}}
We state the following theorem.
\begin{theorem}
	\label{theorem:PropertiesofCycles}
	Let $\mcl{P}_n=\{P_1,\ldots,P_n\}$ be a point arrangement in $\mbb{R}^2$ such that the points form a convex $n\operatorname{-}$gon in the anticlockwise manner $P_1\lra P_2\lra \ldots \lra P_n\lra P_1$ and which has generic diagonals. Let $R$ be a region with associated $2$-standard consecutive cycle $(1=a_1a_2\ldots a_n)$. Let the $2$-standard consective structure be given by 
	\equa{&1<2<\ldots<l,\\&(l+1)<(l+2)<\ldots<n.}
	Then 
	\begin{enumerate}[label=\emph{\arabic*}.]
		\item If $P_iP_j$ is a side of the region $R$ then the elements $i,j$ are consecutive in the cycle $(1=a_1a_2\ldots a_n)$.
		\item The $2$-standard consecutive cycle for the region $S$ on the other side of the region $R$ across $P_iP_j$ is obtained by swapping $i,j$ in the cycle $(1=a_1a_2\ldots a_n)$.
		\item Let $P_iP_j$ be the side of the region $R$ and $j\not\equiv i\pm 1\mod n$. Then either 
		$i\in \{1,2,\ldots,l\},j\in \{l+1,l+2,\ldots,n\}$ or $j\in \{1,2,\ldots,l\},i\in \{l+1,l+2,\ldots,n\}$.
		\item The converse is not true, that is, if $i,j$ occur consecutively in 
		$(1=a_1a_2\ldots a_n)$ such that $j\not\equiv i\pm 1\mod n, 1\leq i\leq l,l+1\leq j\leq n$ then $P_iP_j$ need not be a side of the region $R$.
		\item The number of sides of a region $R$ with the same cycle $(1=a_1a_2\ldots a_n)$ may vary from one convex $n\operatorname{-}$gon to another convex $n\operatorname{-}$gon provided it occurs in them.
		\item The region $R$ is contained in the triangle $\Gd P_iP_jP_k$ with $i<j<k$ if and only if the cycle $(1=a_1a_2\ldots a_n)$ contains $(ikj)$ as a sub-cycle.
	\end{enumerate}
\end{theorem}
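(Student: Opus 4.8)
The plan is to treat the six assertions as a package built on two devices: a continuity/crossing argument for the \emph{local} statements (1) and (2), and the four-point Lemma~\ref{lemma:PointLineCyle} for the \emph{global} statement (6), with (3)--(5) deduced afterwards. I would prove (6) first, since it underlies the rest. Restrict the arrangement $\mcl{P}_n\cup\{P_{n+1}\}$ to the four points $\{P_i,P_j,P_k,P_{n+1}\}$ with $i<j<k$. Because $P_{n+1}$ is interior to the convex hull, its point cycle on this triple is $(ijk)$, while its line cycle on the triple is exactly the sub-cycle of the full line cycle $(1=a_1a_2\ldots a_n)$ on the symbols $\{i,j,k\}$. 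By Lemma~\ref{lemma:PointLineCyle}, $P_{n+1}$ lies inside $\Gd P_iP_jP_k$ if and only if these two $3$-cycles differ, i.e. if and only if the sub-cycle equals $(ikj)$. Since $\Gd P_iP_jP_k$ is a union of regions and $R$ is a single region, $R\subseteq \Gd P_iP_jP_k$ is equivalent to $P_{n+1}\in\Gd P_iP_jP_k$, and (6) follows.

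For (1) and (2) I would move a test point continuously across the edge. Let $P_iP_j$ be a side of $R$, and choose a path from $P_{n+1}\in R$ to a point in the region $S$ on the far side, crossing the relative interior of the edge transversally at a point that lies on no other line; this is possible because the relative interior of an edge of a region meets only the single line $P_iP_j$. Along the path the line cycle is locally constant by Theorem~\ref{theorem:LineCycleofaRegion}, so it can change only at the instant of crossing. At that instant the test point is collinear with $P_i$ and $P_j$, so the directions of the two lines to $P_i$ and to $P_j$ coincide modulo $\pi$, while no third direction agrees with them there. Hence in the angular sweep the symbols $i$ and $j$ are adjacent on both sides of the crossing and the crossing merely transposes them. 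This yields simultaneously that $i,j$ are consecutive in the cycle of $R$ (statement (1)) and that the cycle of $S$ is obtained from that of $R$ by interchanging $i$ and $j$ (statement (2)).

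For (3) I would argue combinatorially from (1): since $P_iP_j$ is a side, $i$ and $j$ are consecutive in $(1=a_1a_2\ldots a_n)$. The two increasing runs $\{1,\ldots,l\}$ and $\{l+1,\ldots,n\}$ each appear in increasing order inside the cyclic word, so if $i$ and $j$ belonged to the same run, then, having nothing between them, they would have to be consecutive integers of that run, forcing $j\equiv i\pm1$ and contradicting the hypothesis --- with the sole exception of the wrap-around pair in which the run maximum is cyclically adjacent to the symbol $1$. Showing that such a same-run wrap-around chord cannot genuinely bound $R$ is the delicate point, and I expect it to be the \textbf{main obstacle} of the whole theorem: it is not decided by the order data of the cycle alone and demands a direct geometric analysis of the convex region $R$ near that chord. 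I would attack it by combining (2) with (6), reading off the admissible cycle of the neighbour region $S$ across the chord together with the triangle-containment pattern that cycle forces, and then checking whether this pattern can be realised by a convex $n$-gon.

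Finally, (4) and (5) are flexibility statements proved by exhibiting configurations. For (4) it suffices to display one region whose cycle has a cross-run consecutive pair that is \emph{not} an edge; already in a hexagon with central region of cycle $(1,4,2,5,3,6)$ the consecutive pair corresponding to $P_2P_4$ is a cross-run diagonal that does not bound the central region, so the converse in (1)/(3) fails. For (5) I would fix a single two standard consecutive cycle and realise it by two convex $n$-gons, holding the angular order of the vertices as seen from the interior point of $R$ fixed (which pins down the cycle) while varying their radial distances; this turns certain candidate edges on and off by sliding the vertices of $R$ through polygon vertices, and thereby changes the number of sides of $R$, the phenomenon already indicated in Figure~\ref{fig:One}.
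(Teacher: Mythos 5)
Your treatment of (1), (2) and (6) is correct and essentially the paper's own argument: the paper likewise takes a point of $R$ close to the side for (1), records the transposition when crossing for (2), and reduces (6) to the four-point statement; your routing of (6) through Lemma~\ref{lemma:PointLineCyle} together with the observation that an interior point's point cycle restricted to $\{i,j,k\}$ is $(ijk)$ is the same proof made explicit. The genuine problem is part (3), and the wrap-around case you flag is not merely delicate, it is fatal. In the regular pentagon let $R$ be the region with apex $P_1$ bounded by $P_1P_3$, $P_1P_4$ and $P_2P_5$. Its line cycle is $(14523)$, whose unique consecutive structure is $1<2<3;\;4<5$ (this is exactly the example following Definition~\ref{defn:2StandardConsecutiveStructure}), so $l=3$. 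Here $P_1P_3$ is a side of $R$ and $3\not\equiv 1\pm 1\pmod 5$, yet $1$ and $3$ lie in the same run $\{1,2,3\}$: your exceptional pair (run maximum cyclically adjacent to the symbol $1$) is realized geometrically. Hence your plan to show that such a chord ``cannot genuinely bound $R$'' cannot succeed, and statement (3) is false as written unless the wrap-around pair is excluded. Your proposed tools would not detect this either: swapping $1,3$ in $(14523)$ gives $(13452)$, which is again two standard consecutive, so the neighbouring region $S$ (here the region along side $P_1P_2$) is perfectly admissible. Note that the paper's own one-line proof of (3) asserts that the swapped cycle is two standard consecutive \emph{only if} the pair is cross-run, and it breaks at precisely this point.

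Your example for (4) is also wrong, on two counts. First, no generic convex hexagon has a central region with cycle $(142536)$: the central triangle, bounded by $P_1P_4$, $P_2P_5$, $P_3P_6$, has cycle $(145236)$ or $(125634)$ depending on its orientation --- these are the paper's two indefinite cycles of Figure~\ref{fig:One} --- never $(142536)$. Second, $(142536)$ is one of the paper's definite, diagonal-distance-two cycles, and in every generic convex hexagon the region carrying it lies directly across $P_2P_4$ from the region with cycle $(124536)$, so $P_2P_4$ \emph{is} one of its sides: your candidate proves the opposite of what you need. The paper's example for (4) is the heptagon of Figure~\ref{fig:Three}, where the pentagonal region has cycle $(1526374)$, the pair $1,5$ is consecutive and cross-run with $5\not\equiv 1\pm 1\pmod 7$, and yet $P_1P_5$ is not a side. (Hexagon examples do exist, e.g.\ the region with cycle $(142563)$ and the pair $\{1,4\}$, but they must be exhibited by computation, not asserted.) Finally, for (5) your deformation sketch is plausible but never carried out, whereas the paper settles it by exhibiting two concrete realizations of the single cycle $(1526374)$: the central region of the regular heptagon, which has seven sides, and the region of Figure~\ref{fig:Three}, which has five.
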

\begin{proof}
~
\begin{enumerate}
\item The line cycle does not change for any point in $R$. Hence choosing a point close to $P_iP_j$ in the region $R$ we conclude that $i,j$ occurs consecutively in the line cycle. Moreover if we give anticlockwise orientation to the sides of $R$ and the directed side is $\ora{ij}$ then $i, j$ appear next to each other with $j$ first and $i$ second in the the cycle $(1=a_1a_2\ldots a_n)$.

\item If we go across the side $P_iP_j$ to a new region $S$ from $R$ then it is clear that there is swap of $i,j$ in the line cycle $(1=a_1a_2\ldots a_n)$ to obtain the line cycle for $S$.

\item If $P_iP_j$ is a side of the region $R$ with $j\not\equiv i\pm 1\mod n$ then it is not the side of the convex $n\operatorname{-}$gon. So there is a region $S$ adjacent to $R$ across $P_iP_j$. The cycle of $S$ is obtained by swapping $i,j$ in $(1=a_1a_2\ldots a_n)$. Now this is $2$-standard consecutive if and only if $i\in \{1,2,\ldots,l\},j\in \{l+1,l+2,\ldots,n\}$ or vice-versa.
\begin{figure}[h]
	\centering
	\includegraphics[width = 0.6\textwidth]{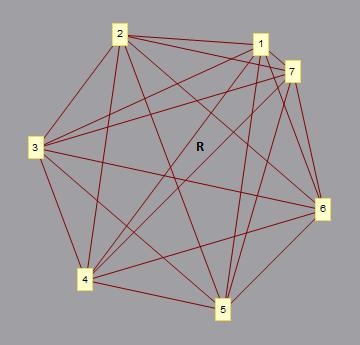}
	\caption{A heptagon and a region $R$ with cycle $(1526374)$ respectively}
	\label{fig:Three}
\end{figure}
\item We consider the example in Figure~\ref{fig:Three}. In this example the region $R$ is a pentagon with cycle $(1526374)$ and the $2$-standard consecutive structure $1<2<3<4; 5<6<7$ with $l=4$. The numbers $1,5$ appear consecutively in the cycle of $R$, however, $P_1P_5$ is not the side of the region $R$.

\item Consider the central region of a regular heptagon. It also has cycle $(1526374)$.
This central region is heptagon where as the region $R$ in Figure~\ref{fig:Three} is a pentagon.

\item Let $Q_1,Q_2,Q_3\in \mcl{P}_n$. Let $Q_4$ be in the interior of the convex $n\operatorname{-}$gon. Then $Q_4$ is in the interior of the triangle $\Gd Q_1Q_2Q_3$ oriented anticlockwise if and only if the line cycle of $Q_4$ for the point arrangement $\{Q_1,Q_2,Q_3,Q_4\}$ is $(132)$.   

\end{enumerate}
This completes the proof of the theorem.
\end{proof}
\subsection{\bf{Characterization of Definite and Indefinite Cycles}}
In this section we characterize the definite and indefinite $2$-standard consecutive $n\operatorname{-}$cycles combinatorially. We begin with a definition.
\begin{defn}[The Diagonal Distance of a $2$-Standard Consecutive Cycle]
Let $(1=a_1a_2\ldots a_n)$ be a $2$-standard consecutive $n\operatorname{-}$cycle. Let the $2$-standard consecutive structure be given by \equa{&1<2<\ldots<l,\\ &l+1<l+2<\ldots<n.}	
Consider the set $S$ of all pairs $\{i,j\}$ with $1\leq i\neq j\leq n$ such that 
\begin{enumerate}
\item $j\not\equiv i\pm 1\mod n$,
\item $i,j$ are consecutive in $(1=a_1a_2\ldots a_n)$ and 	
\item $i\in \{1,\ldots,l\},j\in \{l+1,\ldots,n\}$ or $j\in \{1,\ldots,l\},i\in \{l+1,\ldots,n\}$
\end{enumerate}
The diagonal distance of $(1=a_1a_2\ldots a_n)$ is defined as 
\equ{\us{\{i,j\}\in S}{\min} \{(i-j)\;\mathrm{mod}\;n,(j-i)\;\mathrm{mod}\;n\}}
Here the residue classes $\mathrm{mod}\;n$ are $\{0,1,\ldots,(n-1)\}$.
\end{defn}
\begin{example}
Now we consider an illustration of Theorem~\emph{\ref{theorem:DiagDistTwo}} where the cycles with diagonal distance two are mentioned by taking $n=6$. There are $2^{6-1}-6=26$ $2$-standard consecutive cycles. There are $24$ cycles with diagonal distance two. Now we list the cycles obtained in Case $1$ of Theorem~\emph{\ref{theorem:DiagDistTwo}}. 
	\begin{itemize}
		\item Move $1 \lra (134562),(145623),(156234),(162345)$.
		\item Move $2 \lra (132456),(134256),(134526),(134562)$.
		\item Move $3 \lra (124356),(124536),(124563),(132456)$.
		\item Move $4 \lra (123546),(123564),(142356),(124356)$.
		\item Move $5 \lra (123465),(152346),(125346),(123546)$.
		\item Move $6 \lra (162345),(126345),(123645),(123465)$. 
	\end{itemize}
	These give $n^2-3n=6^2-3.6=18$ cycles which are definite. These are the outermost layer regions of the convex $n\operatorname{-}$gon where $n=6$.
	Now we list the cycles obtained in Case $2$ of Theorem~\emph{\ref{theorem:DiagDistTwo}}.
	\begin{itemize}
		\item Move $1$ and swap $2,6 \lra (134526),(145263),(152634),(126345)$.
		\item Move $2$ and swap $1,3 \lra (124563),(142563),(145263),(145623)$.
		\item Move $3$ and swap $2,4 \lra (142356),(142536),(142563),(134256)$.
		\item Move $4$ and swap $3,5 \lra (125346),(125364),(142536),(124536)$.
		\item Move $5$ and swap $4,6 \lra (123645),(152364),(125364),(123564)$.
		\item Move $6$ and swap $1,5 \lra (156234),(152634),(152364),(152346)$. 
	\end{itemize}
	These give in addition $n^2-5n=6^2-5.6=6$ cycles which are definite. These six cycles correspond to the second outermost layer regions of the convex $n\operatorname{-}$gon where $n=6$.
	We also have two indefinite cycles given by $(145236),(125634)$.
	These total to $18+6+2=26$ cycles.
\end{example}
Next we characterize the $2$-standard consecutive cycle which has diagonal distance two.
\begin{theorem}
\label{theorem:DiagDistTwo}
Let $(1=a_1a_2\ldots a_n)$ be a $2$-standard consecutive $n\operatorname{-}$cycle with the $2$-standard consecutive structure given by $1<\ldots<l;\;\; l+1<\ldots<n$. Then the diagonal distance of the cycle is two if and only if it is obtained from the cycle $(12\ldots n)$
in the following two ways.
\begin{enumerate}[label=\emph{\arabic*}.]
\item Considering the cyclic notation of the cycle $(12\ldots n)$ on a circle in an anticlockwise manner, and moving $i$ forward \emph{(}anticlockwise\emph{)} for some $1\leq i\leq n-1$ in a finite number of steps to any position after $(i+1)$ and before $(i-1)$. 
\item From any cycle obtained in the previous step by moving $i$, we swap the adjacent elements $(i-1),(i+1)$.
\end{enumerate}
\end{theorem}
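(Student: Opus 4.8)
The plan is to translate the statement into the language of the interleaving of the two runs and to argue both implications from the block structure, using that the two standard consecutive structure $1<\cdots<l,\ l+1<\cdots<n$ is uniquely determined (Theorem~\ref{theorem:iStandardConsecutiveCycle}). The basic reformulation I would set up first, via Theorem~\ref{theorem:PropertiesofCycles}(3), is that the pairs in the set $S$ defining the diagonal distance are exactly the cyclically consecutive pairs $\{a_r,a_{r+1}\}$ straddling the two runs (one entry in $\{1,\dots,l\}$, the other in $\{l+1,\dots,n\}$) which are not sides. The only straddling sides are $\{l,l+1\}$ and $\{1,n\}$, so every other straddling consecutive pair is a diagonal and has cyclic distance at least $2$. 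Hence the diagonal distance is always $\geq 2$, and it equals $2$ precisely when some cyclically consecutive straddling pair has cyclic distance exactly $2$. This reduces the whole theorem to locating such a pair.

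For the direction that the constructions give diagonal distance $2$, I would simply exhibit the pair. When $i$ is moved forward (Case~$1$), its two former neighbours $i-1$ and $i+1$ become cyclically adjacent; recording the induced split (which is $l=i$ for an interior placement and $l=i-1$ for a wrap-around placement) shows that in either case one of $i-1,i+1$ lies in each run, so $\{i-1,i+1\}$ is a straddling diagonal pair of cyclic distance $2$. In Case~$2$ the transposition of $(i-1)$ and $(i+1)$ only reverses the order of this adjacent pair, so it remains a straddling diagonal pair of cyclic distance $2$. Thus every constructed cycle has diagonal distance $2$; the only routine bookkeeping is the value of $l$ in each placement.

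For the harder converse, assume the diagonal distance is $2$ and fix a cyclically consecutive straddling pair $\{z-1,z+1\}$. The ``missing'' value $z$ between them is forced out of its sorted slot, and one entry of the pair is necessarily a run endpoint, i.e. one of $\min A=1,\ \max A=l,\ \min B=l+1,\ \max B=n$ (equivalently $z\in\{1,l,l+1,n\}$). The crux is a rigidity argument: since $z+1$ being $\min B$ (resp. $z-1$ being $\max A$, etc.) means everything before the break is forced to be the sorted prefix and everything after it is a shuffle of the single remaining value $z$ into the sorted suffix, the whole cycle is pinned down up to the placement of $z$. When the adjacency has $z-1$ before $z+1$ this is exactly a forward move of $z$ (Case~$1$); when it occurs in the reversed order it is a forward move of $z$ followed by the transposition of $z-1,z+1$ (Case~$2$), the latter identification being Theorem~\ref{theorem:PropertiesofCycles}(2). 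Treating the interior displacements $z=l,\,l+1$ and the wrap displacements $z=1,\,n$ exhausts all cases.

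The step I expect to be the main obstacle is precisely the rigidity analysis at the wrap, i.e. when the displaced value $z$ is $1$ or $n$: there the normalization $a_1=1$ interferes with the cyclic adjacency, the induced split $l$ is no longer $z$, and one must re-express the cycle as a (wrap-around) forward move of a boundary element. Handling these boundary placements, and confirming that a cycle carrying several distance-$2$ straddling pairs is consistently captured by at least one of the two constructions so that the case division is exhaustive and non-redundant, is where the real work lies; by contrast the interior displacements follow cleanly from the rigidity argument and the side/diagonal reformulation above.
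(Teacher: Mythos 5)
Your reformulation (the diagonal distance is always at least two, and equals two exactly when some cyclically consecutive run-straddling pair has the form $\{z-1,z+1\}$) is correct, and your reverse implication is essentially the paper's own: exhibit the adjacent pair $\{i-1,i+1\}$ created by the move and check that it straddles the two runs. For the forward implication, however, you take a genuinely different route. The paper argues geometrically: it realizes the given cycle as an actual region $R$ of a convex $n\operatorname{-}$gon via the construction in Lemma~\ref{lemma:NumberofTwoStandardConsecutivecycles}, then walks from $R$ out of the polygon across the side $P_{i-1}P_i$ or $P_iP_{i+1}$, each region crossing being an adjacent swap in the line cycle (Theorem~\ref{theorem:PropertiesofCycles}(2)), so that the exit cycle $(12\ldots n)$ is reached by undoing moves of types (1),(2). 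Your rigidity argument replaces this by pure combinatorics, and in the cases $z\in\{l,l+1\}$ and $z=1$ it is sound: the two increasing runs really do pin the cycle down to ``$(12\ldots n)$ with the single value $z$ displaced, possibly composed with the transposition of $z-1,z+1$,'' which is exactly Case (1) or (2) with $i=z\leq n-1$.

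The genuine gap is the wrap case $z=n$, precisely the one you defer. There the pair is $\{n-1,1\}$; the order ``$n-1$ then $1$'' is impossible (it would force $n$ to precede $n-1$ in the second run), and the order ``$1$ then $n-1$'' forces $l=n-2$ and pins the cycle to the family $(1\,(n-1)\,2\,3\ldots k\,n\,(k+1)\ldots(n-2))$. For $3\leq k\leq n-4$ (a nonempty range for every $n\geq 7$) such a cycle admits no description as a forward move of any $i\leq n-1$, with or without the swap: for $n=8$ the cycle $(17238456)$ has diagonal distance two via the pair $\{7,1\}$, yet deleting any single $i\leq 7$ never leaves a sorted cycle, and no adjacent pair of the form $(i+1,i-1)$ with $i\leq 7$ occurs in it; its only description is ``move $n=8$ forward three steps, then swap $(7,1)$ cyclically.'' So the re-expression you hope for (``a wrap-around forward move of a boundary element'') does not exist, and your case analysis cannot be closed within the stated range of $i$. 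What your argument actually establishes is a version of the theorem in which $i$ runs cyclically over $\{1,\ldots,n\}$ in Case (2) --- which is what the paper's own example lists do (``Move $6$'' for $n=6$; ``for $1\leq i\leq 8$ \ldots considered cyclically'' for $n=8$) despite the restriction $1\leq i\leq n-1$ in the statement --- while $i=n$ must still be excluded from Case (1), since for instance $(1237456)$, obtained by moving $7$ in $(1234567)$, has diagonal distance three. Until your $z=n$ case is reconciled with the stated move set, the forward implication of your proof is incomplete.
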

\begin{proof}
We prove the reverse implication $(\La)$ first. Any cycle obtained in steps $(1),(2)$ have diagonal distance two since $(i-1),(i+1)$ considered cyclically are consecutive and either 
$(i-1)\in \{1,\ldots,l\},i+1\in \{l+1,l+2,\ldots,n\}$ or $(i+1)\in \{1,\ldots,l\},i-1\in \{l+1,l+2,\ldots,n\}$. This proves that the diagonal distance is two.

Now we prove the forward implication. Suppose the diagonal distance is two. Then first we construct a convex $n\operatorname{-}$gon with vertices $P_1\lra P_2\lra \ldots \lra P_n\lra P_1$ in this anticlockwise manner such that the cycle $(1=a_1a_2\ldots a_n)$ appears as a region using Figure~\ref{fig:Two}. 
Since the diagonal distance is two, there exists $1\leq i\leq n$ such that $i-1,i+1$ appear consecutively and either $(i-1)\in \{1,\ldots,l\},i+1\in \{l+1,l+2,\ldots,n\}$ or $(i+1)\in \{1,\ldots,l\},i-1\in \{l+1,l+2,\ldots,n\}$. Now using the convex $n\operatorname{-}$gon, by applying moves of the type mentioned in step $(1),(2)$ on the cycle, we just cross the regions to reach the outside of convex $n\operatorname{-}$gon via crossing either the side $P_{i-1}P_i$ or $P_iP_{i+1}$. Here the cycle we arrive at must be the cycle $(12\ldots n)$. Hence this proves the forward implication thereby completing the proof of the theorem.
\end{proof}
Now we prove a lemma which is used in the proof of Theorem~\ref{theorem:DefIndefRegions}. This lemma describes certain $2$-standard consecutive $(n-1)\operatorname{-}$cycles of diagonal distance two which upon adding $n$ gives two standard consecutive $n\operatorname{-}$cycles of diagonal distance two.
\begin{lemma}
\label{lemma:TwoStandardExtension}
Let $7\leq n\in \mbb{N}$. Let $(1=a_1a_2\ldots a_{n-1})$ be a $2$-standard consecutive $(n-1)\operatorname{-}$cycle with diagonal distance two. For some $1\leq i\leq n-1$ let \equ{(i-1)\;\mathrm{mod}\;(n-1),(i+1)\;\mathrm{mod}\;(n-1)\in \{1,\ldots,(n-1)\}} appear consecutively in the cycle and suppose $i\nin \{1,2,(n-2),(n-1)\}$, that is, \equa{\{(i-1)\;\mathrm{mod}\;(n-1),&(i+1)\;\mathrm{mod}\;(n-1)\}\\ \nin\{\{1,n-2\},&\{2,n-1\},\{1,3\},\{n-3,n-1\}\}.} If we add $n$ to the cycle and obtain a $2$-standard consecutive $n\operatorname{-}$cycle then it also has diagonal distance two.
\end{lemma}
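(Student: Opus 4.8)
The plan is to prove the conclusion by producing, inside the enlarged $n$-cycle, one explicit consecutive straddling pair whose cyclic distance is exactly two, and then to observe that no admissible pair can do better. Since the diagonal distance is defined as the minimum of $\min\{(a-b)\bmod n,(b-a)\bmod n\}$ over all pairs satisfying conditions $(1)$--$(3)$, and condition $(1)$ forces every such pair to be a genuine diagonal (hence of cyclic distance at least two), exhibiting a single witnessing pair of distance two will pin the diagonal distance at two. The natural witness is the pair $\{i-1,i+1\}$ itself, carried over from the $(n-1)$-cycle.

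First I would record the elementary numerics that the hypothesis $i\notin\{1,2,n-2,n-1\}$ provides, as this is the crux of the whole argument. For $i\in\{3,\ldots,n-3\}$ one has $(i-1)\bmod(n-1)=i-1$ and $(i+1)\bmod(n-1)=i+1$ with $2\leq i-1<i+1\leq n-2$, so there is no wrap-around, the two values differ by exactly two, and the intermediate value $i$ is a genuine symbol lying strictly between them. I would then show that $\{i-1,i+1\}$ straddles the two blocks $\{1,\ldots,l\}$ and $\{l+1,\ldots,n-1\}$. If instead $i-1$ and $i+1$ lay in a common block, then $i$ would lie in it too, and by the two standard consecutive structure the symbols $i-1,i,i+1$ would occur in that left-to-right order in the cyclic word; thus $i$ would occupy the forward arc between them, while the symbol $1$ (distinct from both, since $i-1\geq 2$) would occupy the complementary arc obtained by wrapping past the end back to the start. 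Both arcs being nonempty contradicts that $\{i-1,i+1\}$ is consecutive, so $i-1\leq l$ and $i+1\geq l+1$.

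Next I would adjoin $n$. Since $n$ is the largest symbol, in any two standard consecutive $n$-cycle it must sit at the top of the upper block, and by the uniqueness of the structure (Theorem~\ref{theorem:iStandardConsecutiveCycle}) the split point $l$ among $\{1,\ldots,n-1\}$ is unchanged, so $\{i-1,i+1\}$ still straddles. Moreover $n$ cannot be inserted between $i-1$ and $i+1$: the symbol $n-1$ lies in the upper block with $n-1>i+1$, hence within that block $i+1$ precedes $n-1$ which precedes $n$, so placing $n$ before $i+1$ would violate the structure. Therefore $\{i-1,i+1\}$ remains consecutive in the $n$-cycle, still straddles, is not adjacent modulo $n$ (its values differ by $2$, and $2\neq n-1$ for $n\geq 7$), and has cyclic distance $\min\{2,n-2\}=2$. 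Combining this with the automatic lower bound from condition $(1)$ yields diagonal distance exactly two.

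I expect the genuine obstacles to be precisely the second and third steps, namely showing that ``consecutive'' forces ``straddling'' and that no legitimate placement of $n$ can separate the pair. These are exactly the points that break at the four excluded values: for $i\in\{1,n-1\}$ the pair wraps around (it is $\{n-1,2\}$ or $\{n-2,1\}$), so adjoining $n$ at the top shifts its cyclic distance from $2$ up to $3$; and the boundary values $i\in\{2,n-2\}$ are where the anchored symbol $1$ or the top symbol $n-1$ can make a consecutive pair fail to straddle. The content of the lemma is therefore that the interior condition $i\notin\{1,2,n-2,n-1\}$ is exactly what lets the single witnessing pair survive the extension uniformly, and the proof above is designed to isolate and use that condition at each step.
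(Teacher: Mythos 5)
Your argument is correct, and at the crucial step it takes a genuinely different and cleaner route than the paper. Both proofs share the same skeleton: show that the given pair $\{i-1,i+1\}$ survives the adjunction of $n$ as a consecutive, straddling, non-adjacent pair, and then use the fact that condition $(1)$ bounds the diagonal distance below by two. The difference lies in how the bad placement of $n$ (between the two elements of the pair) is excluded. The paper does this by brute force: it enumerates eight explicit shapes of extended cycles in which $n$ can sit between the pair while keeping the two standard consecutive property, and checks that each of them forces the pair into the excluded list $\{\{1,3\},\{2,n-1\},\{1,n-2\},\{n-3,n-1\}\}$. You instead note that $i\in\{3,\ldots,n-3\}$ gives $i+1\leq n-2$, so $n-1$ is a further element of the upper block larger than $i+1$; the block ordering then forces $i+1$ to precede $n-1$ to precede $n$ in the word, so $n$ can never be adjacent to $i+1$, which kills the ``between'' case in one stroke and simultaneously explains why precisely those four pairs must be excluded. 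Your proof also makes explicit two facts the paper leaves tacit: that a consecutive non-excluded pair automatically straddles the two blocks (your arc argument using $i$ and the anchor $1$), and that the split point $l$ is unchanged after adjoining $n$. Two places to tighten the write-up: (i) your closing contradiction is phrased only for $n$ placed immediately before $i+1$, i.e.\ for the pair occurring in the order $i-1,i+1$; when it occurs in the order $i+1,i-1$ (which does happen, e.g.\ the pair $4,2$ in $(142536)$), inserting $n$ puts it immediately after $i+1$, and the contradiction is instead that $n-1$ then has no room strictly between $i+1$ and $n$ --- your ordering already rules this out, but it should be said; (ii) the appeal to uniqueness of the structure for ``the split point is unchanged'' needs the one-line remark that the upper block of the extended cycle cannot be $\{n\}$ alone, since deleting $n$ would then leave the $1$-standard cycle $(12\ldots(n-1))$, contradicting that the original cycle is two standard consecutive.
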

\begin{proof}
If we add $n$ to $(1=a_1a_2\ldots a_{n-1})$ in the right of both $(i-1)\;\mathrm{mod}\;(n-1),(i+1)\;\mathrm{mod}\;(n-1)$, the cycle still has diagonal distance two unless $\{(i-1)\;\mathrm{mod}\;(n-1),(i+1)\;\mathrm{mod}\;(n-1)\} \in\{\{2,n-1\},\{1,n-2\}\}$. We cannot add $n$ after $1$ and to the left of both of them as the resulting cycle will not be $2$-standard consecutive. Now if we add $n$ in between $(i-1)\;\mathrm{mod}\;(n-1)$ and $(i+1)\;\mathrm{mod}\;(n-1)$ then only the following possibilities occur.
\begin{enumerate}[label=(\arabic*)]
\item $(1=a_1a_2\ldots a_{n-1}n)=(1=a_1\ldots 3n)$ with $a_{n-1}=3$ or
\item $(1=a_1a_2\ldots a_{n-1}n)=(1(n-1)2\ldots (n-2)n)$ with $a_2=(n-1),a_j=j-1,3\leq j\leq n-1$  or
\item $(1=a_1a_2\ldots a_{n-1}n)=(12\ldots j(n-1)(j+1)\ldots (n-2)n)$ for some $2\leq j\leq (n-3)$
\item $(1=a_1\ldots (n-1)n2 \ldots a_{n-1})=(1j\ldots (n-2)(n-1)n2\ldots (j-1))$ with $4\leq j\leq n-2$ or
\item $(1=a_1a_2na_3\ldots a_{n-1})=(1(n-1)n2\ldots (n-2))$ with $a_2=(n-1),a_j=j-1,3\leq j\leq n-1$ or
\item $(1=a_1a_2\ldots a_{n-2}na_{n-1})=(13\ldots (n-1)n2)$ with $a_{n-1}=2,a_j=(j+1),2\leq j\leq n-2$ or
\item $(1=a_1a_2\ldots a_{n-4}a_{n-3}na_{n-2}a_{n-1})=(123\ldots (n-4)(n-1)n(n-3)(n-2))$ with $a_j=j,2\leq j\leq n-4,a_{n-3}=n-1,a_{n-2}=n-3,a_{n-1}=n-2$.
\item $(1=a_1a_2\ldots a_{n-3}a_{n-2}na_{n-1})=(1\ldots j(n-2)(j+1)\ldots(n-4)(n-1)n(n-3))$ for some $2\leq j\leq n-5,a_{n-3}=n-4,a_{n-2}=n-1,a_{n-1}=n-3$.
\end{enumerate}
In these cases we have $\{(i-1)\;\mathrm{mod}\;(n-1),(i+1)\;\mathrm{mod}\;(n-1)\} \in\{\{1,n-2\},\{2,n-1\},\{1,3\},\{n-1,n-3\}\}$. This proves the lemma.
\end{proof}
Now we state and prove the main theorem of the article. Later in Theorem~\ref{theorem:Reducibility}, we will formulate the main theorem in the language of matroids.
\begin{thmOmega}
\namedlabel{theorem:DefIndefRegions}{$\Gom$}
Let $(1=a_1a_2\ldots a_n)$ be a $2$-standard consecutive $n\operatorname{-}$cycle. 
\begin{enumerate}[label=(\Alph*)]
\item The cycle $(1=a_1a_2\ldots a_n)$ is definite if and only if it is obtained from the cycle $(12\ldots n)$ in the following three ways.
\begin{enumerate}
\item Considering the cyclic notation of the cycle $(12\ldots n)$ on a circle in an anticlockwise manner, and moving $i$ forward (anticlockwise) for some $1\leq i\leq n-1$ in a finite number of steps to any position after $(i+1)$ and before $(i-1)$. The diagonal distance of these cycles is two.
\item From any cycle obtained in the previous step by moving $i$, we swap the adjacent elements $(i-1),(i+1)$. The diagonal distance of these cycles is also two.
\item $n=7$ and $(1=a_1a_2\ldots a_n)=(1526374)$. This is the only cycle which has diagonal distance more than two and is definite. This phenomenon occurs only when $n=7$.
\end{enumerate}
\item The cycle $(1=a_1a_2\ldots a_n)$ is indefinite if and only if there exists \equ{1\leq i_1<i_2<i_3<i_4<i_5<i_6\leq n} with the following property that if for some $j\in \{0,1,2,3,4,5\}$ \equ{a_{i_{j+1}}=\min\{a_{i_1},a_{i_2},a_{i_3},a_{i_4},a_{i_5},a_{i_6}\}\text{ with } a_{i_{j+t}}=a_{i_{((j+t)\;\mathrm{mod}\; 6)}}} where 
$((j+t)\;\mathrm{mod}\;6) \in \{1,2,3,4,5,6\}$ then we have either \equ{a_{i_{j+1}}<a_{i_{j+4}}<a_{i_{j+5}}<a_{i_{j+2}}<a_{i_{j+3}}<a_{i_{j+6}}} or \equ{a_{i_{j+1}}<a_{i_{j+2}}<a_{i_{j+5}}<a_{i_{j+6}}<a_{i_{j+3}}<a_{i_{j+4}}.}
Here in the subscripts the local cycles $(145236),(125634)$ appear which are the indefinite cycles for $n=6$.
\end{enumerate}
\end{thmOmega}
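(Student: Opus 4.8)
The plan is to split each biconditional of the theorem into a geometric realizability part and a combinatorial part. Write $D_n$ for the set of $n$-cycles described in $(A)$ and $I_n$ for the set of cycles exhibiting the six-index pattern of $(B)$. Since ``definite'' and ``indefinite'' are complementary by definition, the theorem reduces to three assertions: \textbf{(i)} every cycle in $D_n$ is definite; \textbf{(ii)} every cycle in $I_n$ is indefinite; and \textbf{(iii)} the combinatorial dichotomy that each two standard consecutive $n$-cycle lies in exactly one of $D_n$ and $I_n$. Granting these, the Boolean combination is immediate: by (iii) the complement of $D_n$ is $I_n$, so (i) gives ``$D_n \Rightarrow$ definite'' and its contrapositive ``indefinite $\Rightarrow I_n$'', while (ii) gives ``$I_n \Rightarrow$ indefinite'' and ``definite $\Rightarrow D_n$''; these four implications are exactly $(A)$ and $(B)$.

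For (i) I would start from Theorem~\ref{theorem:DiagDistTwo}, which already identifies the cycles of $(A)(1)$ and $(A)(2)$ with precisely the cycles of diagonal distance two. Geometrically these label the outermost and second-outermost layers of cells of the subdivision, and I would argue that such near-boundary cells persist in every convex $n$-gon with generic diagonals: each is cut off by a shortest diagonal $P_{i-1}P_{i+1}$, whose corner triangle $\Gd P_{i-1}P_iP_{i+1}$ is subdivided only by the rays fanning out of the single vertex $P_i$, so its combinatorial type depends on the cyclic vertex order alone and on no interior concurrence. By Theorem~\ref{theorem:PropertiesofCycles}$(2)$ the cells immediately across $P_{i-1}P_{i+1}$ realize the swapped cycles of $(A)(2)$, so both layers occur always. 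The exceptional cycle $(1526374)$ of $(A)(3)$ must be handled separately by a direct verification that the central cell of every heptagon with generic diagonals carries this line cycle; the salient point is that for $n=7$ no forced triple concurrence can collapse that cell, unlike the hexagonal case.

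For (ii), suppose the cycle contains the local pattern $(145236)$ on indices $i_1<\dots<i_6$, the case $(125634)$ being symmetric, and relabel those six vertices $Q_1,\dots,Q_6$. By Theorem~\ref{theorem:PropertiesofCycles}$(6)$ the embedded sub-cycles force the region $R$ into the central triangle of the hexagonal sub-configuration $Q_1\cdots Q_6$, namely the triangle cut out by the three long diagonals $Q_1Q_4,Q_2Q_5,Q_3Q_6$ in one of its two orientations. Using the realizing construction of Lemma~\ref{lemma:NumberofTwoStandardConsecutivecycles}, I would exhibit a convex $n$-gon in which these three diagonals are made concurrent while every other diagonal triple stays non-concurrent; a small generic perturbation then reopens the central triangle in the opposite orientation, so the region actually present there carries $(125634)$ rather than $(145236)$. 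Hence the given cycle fails to occur in this polygon and is indefinite. The analytic input is a transversality remark: concurrence of a fixed diagonal triple is a codimension-one condition on the space of convex $n$-gons, so it can be attained and then crossed while preserving convexity and the genericity of all other triples.

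The genuinely hard part is the combinatorial dichotomy (iii): a two standard consecutive $n$-cycle lies in $D_n$ if and only if it avoids the six-index pattern of $(B)$. I would prove this by induction on $n$, deleting the largest symbol $n$ and invoking Theorem~\ref{theorem:iStandardConsecutiveCycle} to track how the two-standard structure restricts to the resulting $(n-1)$-cycle. Lemma~\ref{lemma:TwoStandardExtension} is the inductive bookkeeping device: it records, for a diagonal-distance-two $(n-1)$-cycle, exactly which reinsertions of $n$ keep the diagonal distance equal to two and which raise it, and its excluded positions are precisely the boundary pairs $\{1,3\},\{2,n-1\},\{1,n-2\},\{n-3,n-1\}$. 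The base cases $n=6,7$ are checked against the explicit lists, namely the two indefinite hexagon cycles $(145236),(125634)$ and the single exceptional heptagon cycle $(1526374)$. The obstacle I expect to dominate the work is synchronizing two implications through this insertion analysis: that raising the diagonal distance above two (outside the $n=7$ exception) always \emph{creates} an embedded copy of $(145236)$ or $(125634)$ on six suitable indices, and conversely that every cycle of $D_n$ remains pattern-free; most of the casework lives in keeping these in step as $n$ is inserted.
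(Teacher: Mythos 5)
Your overall decomposition is the same skeleton as the paper's proof: realizability of the diagonal\operatorname{-}distance\operatorname{-}two cycles via the corner\operatorname{-}triangle fan (these are exactly the two outermost layers), mutual exclusivity of the two orientations of the local triangle for the pattern in $(B)$, and a deletion/insertion\operatorname{-}of\operatorname{-}$n$ induction organized by Lemma~\ref{lemma:TwoStandardExtension}. However, your treatment of the exceptional cycle $(1526374)$ in $(A)(c)$ is a genuine gap. ``Direct verification that the central cell of every heptagon carries this line cycle'' is not an argument; it is precisely the assertion to be proved, and it is moreover false that the realizing region is ``the central cell'': by Theorem~\ref{theorem:PropertiesofCycles}$(5)$ and Figure~\ref{fig:Three}, the region carrying $(1526374)$ is a pentagon in some heptagons and a heptagon in others, so there is no fixed cell whose persistence you can check, and nothing in a transversality remark rules out that such a cell simply disappears --- that is exactly what happens to the analogous cell of a hexagon. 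The paper proves $(A)(c)$ by counting instead: for $n=7$ there are exactly $50$ regions and $57$ two standard consecutive cycles, of which $2n^2-8n=42$ have diagonal distance two and are definite; the remaining $15$ cycles split into $7$ mutually exclusive pairs (each contributing at most one region) plus the single cycle $(1526374)$, and since $42+7=49<50$, the cycle $(1526374)$ is forced to occur in every heptagon with generic diagonals.

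The second gap is the base of your induction for the dichotomy (iii). You propose base cases $n=6,7$, but the step from $7$ to $8$ cannot run on your stated inputs: the inductive hypothesis you need --- every two standard consecutive $(n-1)\operatorname{-}$cycle of diagonal distance greater than two contains the six\operatorname{-}index pattern of $(B)$ --- is false for $n-1=7$, exactly because of $(1526374)$. Consequently every $8\operatorname{-}$cycle obtained by inserting $8$ into $(1526374)$ (for instance $(15263748)$ and $(15263784)$, which the paper shows satisfy $(B)$) must be classified by hand; it cannot inherit the pattern from the $7\operatorname{-}$cycle, which has none, and Lemma~\ref{lemma:TwoStandardExtension} does not apply since it speaks only of $(n-1)\operatorname{-}$cycles of diagonal distance two. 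This is why the paper's base case is $n=8$, established by exhaustively listing all $2^{7}-8=120$ two standard consecutive $8\operatorname{-}$cycles in fifteen cyclic\operatorname{-}shift orbits and checking that eight orbits have diagonal distance two while the remaining seven satisfy $(B)$. Unless you either perform that enumeration or add a separate analysis of all insertions of $8$ into $(1526374)$, your induction never launches; with those two repairs (the counting argument for $n=7$ and the $n=8$ base case) your outline coincides with the paper's proof.
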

The proof of Theorem~\ref{theorem:DefIndefRegions} is given after the following example.
\begin{example}
We illustrate Theorem~\ref{theorem:DefIndefRegions}$\emph{(B)}$ in this example.
Clearly the cycles $(145236),(125634)$ are indefinite using Theorem~\ref{theorem:DefIndefRegions}$\emph{(B)}$.
Consider the $2$-standard consecutive cycle \equ{(1=a_1a_2a_3a_4a_5a_6a_7a_8)=(15263748).} Choose $i_1=1,i_2=2,i_3=4,i_4=5,i_5=7,i_6=8$. Choose $j=0$. We have 
\equa{&a_{i_{j+1}}=a_{i_1}=a_1=1<a_{i_{j+4}}=a_{i_4}=a_5=3<a_{i_{j+5}}=a_{i_5}=a_7=4<\\
&a_{i_{j+2}}=a_{i_2}=a_2=5<a_{i_{j+3}}=a_{i_3}=a_4=6<a_{i_{j+6}}=a_{i_6}=a_8=8.}
So using  Theorem~\ref{theorem:DefIndefRegions}$\emph{(B)}$ we have that the cycle is indefinite. Also observe that this can be expressed by the fact that $(815634)$ is a sub-cycle.

Consider the $2$-standard consecutive cycle \equ{(1=a_1a_2a_3a_4a_5a_6a_7a_8)=(15263784).} Choose
$i_1=2,i_2=3,i_3=5,i_4=6,i_5=7,i_6=8$. Choose $j=1$. We have 
\equa{&a_{i_{j+1}}=a_{i_2}=a_3=2<a_{i_{j+2}}=a_{i_3}=a_5=3<a_{i_{j+5}}=a_{i_6}=a_8=4<\\
&a_{i_{j+6}}=a_{i_1}=a_2=5<a_{i_{j+3}}=a_{i_4}=a_6=7<a_{i_{j+4}}=a_{i_5}=a_7=8.}
So using  Theorem~\ref{theorem:DefIndefRegions}$\emph{(B)}$ we have that the cycle is indefinite. Also observe that this can be expressed by the fact that $(452378)$ is a sub-cycle.
\end{example}
\begin{remark}
From Theorem~\ref{theorem:DefIndefRegions} and the property given in Theorem~\ref{theorem:DefIndefRegions}(B), it is clear that a region $R$ has an indefinite cycle if and only if it does not occur in some convex $n$-gon with generic diagonals. Also this fact becomes more clear in the proof of Theorem~\ref{theorem:DefIndefRegions}. 
\end{remark}
\begin{proof}[Proof of Theorem~\ref{theorem:DefIndefRegions}]
First we prove the reverse $(\La)$ implication in (A). 

The cycles obtained in A(a)
and A(b) exactly correspond to the regions for which one of the following $n$ diagonals \equ{P_1P_3,P_2P_4,\ldots,P_{n-1}P_1,P_nP_2} is a side and these clearly are definite regions.
These cycles have diagonal distance exactly two. There are $2n^2-8n$ such cycles with corresponding regions. For $n=7$ there are $2^{7-1}-7=57$ $2$-standard consecutive cycles. There are $\frac{(7-1)(7-2)(7^2-3*7+12)}{24}=50$ regions in a heptagon with generic diagonals. Now in Figure~\ref{fig:One} we have seen for the hexagon that the cycles $(145236)$ and $(125634)$ are indefinite and mutually exclusive, that is, if one occurs then the other does not occur. Extending this scenario for $n=7$ we conclude that there are seven pair of mutually exclusive $2$-standard consecutive cycles. They are obtained by cyclically shifting as follows.
\begin{enumerate}[label=(\Roman*)]
\item First pair $(1523674),(1256347)$. We ignore $1$ and the oriented triangle $\Gd \ora{P_5P_2}\ora{P_3P_6}\ora{P_7P_4}$ containing the region $(1523674)$ is oriented clockwise 
where as the oriented triangle $\Gd \ora{P_5P_2}\ora{P_3P_6}\ora{P_7P_4}$ containing the region $(1256347)$ is anticlockwise.
We illustrate this in Figure~\ref{fig:Four}.
\begin{figure}[h]
	\centering
	\includegraphics[width = 0.8\textwidth]{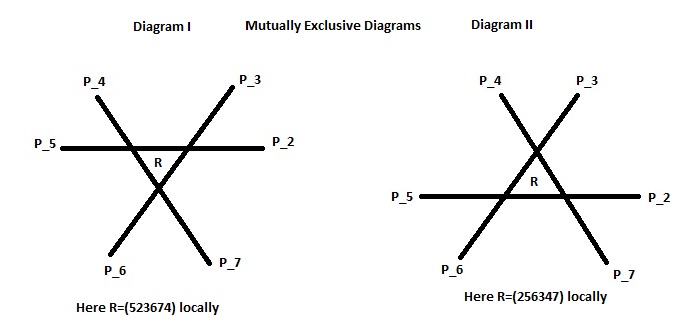}
	\caption{Local Triangles ignoring $1$ containing mutually exclusive cycles/regions $(1523674),(1256347)$ respectively}
	\label{fig:Four}
\end{figure}
\item Second pair $(1526347),(1236745)$. We ignore $2$ and the oriented triangle $\Gd \ora{P_1P_5}\ora{P_6P_3}\ora{P_4P_7}$ containing the region $(1526347)$ is oriented clockwise 
where as the oriented triangle $\Gd \ora{P_1P_5}\ora{P_6P_3}\ora{P_4P_7}$ containing the region $(1236745)$ is anticlockwise.
\item Third pair $(1263745),(1562347)$. We ignore $3$ and the oriented triangle $\Gd \ora{P_5P_1}\ora{P_2P_6}\ora{P_7P_4}$ containing the region $(1263745)$ is oriented clockwise 
where as the oriented triangle $\Gd \ora{P_5P_1}\ora{P_2P_6}\ora{P_7P_4}$ containing the region $(1562347)$ is anticlockwise.
\item Fourth pair $(1562374),(1267345)$. We ignore $4$ and the oriented triangle $\Gd \ora{P_1P_5}\ora{P_6P_2}\ora{P_3P_7}$ containing the region $(1562374)$ is oriented clockwise 
where as the oriented triangle $\Gd \ora{P_1P_5}\ora{P_6P_2}\ora{P_3P_7}$ containing the region $(1267345)$ is anticlockwise. 
\item Fifth pair $(1526734),(1456237)$. We ignore $5$ and the oriented triangle $\Gd \ora{P_4P_1}\ora{P_2P_6}\ora{P_7P_3}$ containing the region $(1526734)$ is oriented clockwise 
where as the oriented triangle $\Gd \ora{P_4P_1}\ora{P_2P_6}\ora{P_7P_3}$ containing the region $(1456237)$ is anticlockwise. 
\item Sixth pair $(1452637),(1256734)$. We ignore $6$ and the oriented triangle $\Gd \ora{P_1P_4}\ora{P_5P_2}\ora{P_3P_7}$ containing the region $(1452637)$ is oriented clockwise 
where as the oriented triangle $\Gd \ora{P_1P_4}\ora{P_5P_2}\ora{P_3P_7}$ containing the region $(1256734)$ is anticlockwise. 
\item Seventh pair $(1256374),(1452367)$. We ignore $7$ and the oriented triangle $\Gd \ora{P_4P_1}\ora{P_2P_5}\ora{P_6P_3}$ containing the region $(1256374)$ is oriented clockwise 
where as the oriented triangle $\Gd \ora{P_4P_1}\ora{P_2P_5}\ora{P_6P_3}$ containing the region $(1452367)$ is anticlockwise. 
\end{enumerate}
Now there are $2n^2-8n=2*7^2-8*7=42$ definite cycles whose regions definitely occur using A(a),A(b). These cycles can be written down. Out of the remaining $15$ cycles there are seven mutually exclusive pairs. Totally there are $50$ regions that occur in a heptagon with generic diagonals. Hence there is one more definite cycle which occurs that is cycle $(1526374)$.
This completes the proof of reverse implication $(\La)$ of Theorem~\ref{theorem:DefIndefRegions}(A).

Now we prove the reverse implication $(\La)$ in (B). In both the cases mentioned in (B) we can obtain a mutually exclusive cycle for the cycle $(1=a_1a_2\ldots a_n)$ by orienting the local triangle the other way similar to the diagrams given in Figure~\ref{fig:Four}. We illustrate this in Figure~\ref{fig:Five}.
\begin{figure}[h]
	\centering
	\includegraphics[width = 1.0\textwidth]{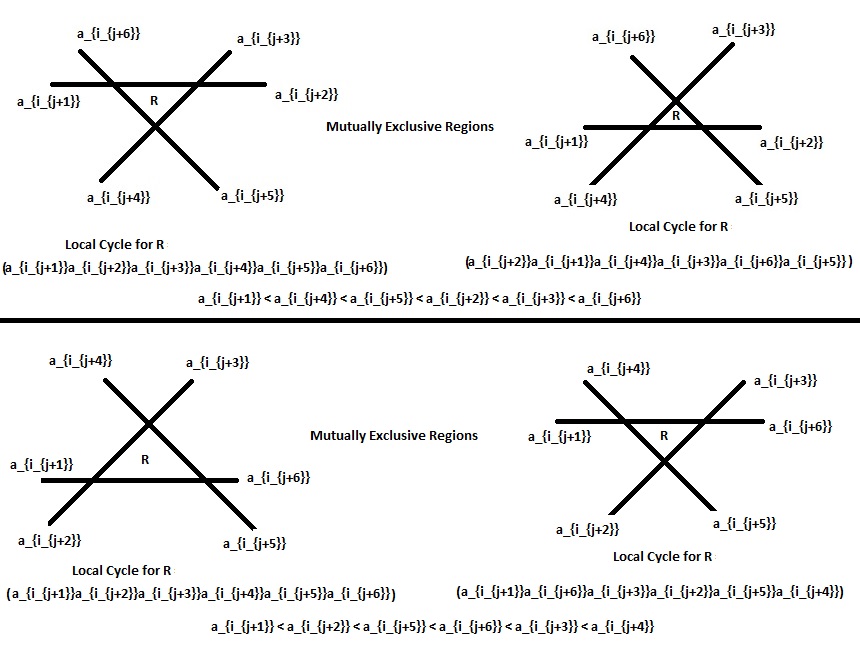}
	\caption{Mutually Exclusive Indefinite Regions}
	\label{fig:Five}
\end{figure}
This proves that the cycle $(1=a_1a_2\ldots a_n)$ is indefinite. This completes the proof of reverse implication $(\La)$ of Theorem~\ref{theorem:DefIndefRegions}(B).

To complete the proof of Theorem~\ref{theorem:DefIndefRegions} it is enough to prove that if for a $2$-standard consecutive cycle $(1=a_1a_2\ldots a_n)$, the diagonal distance is more than two and $n\geq 8$ (Note: $n\neq 7$) then the cycle is indefinite and satisfies the property mentioned in Theorem~\ref{theorem:DefIndefRegions}(B). 

First we show that for $n=8$ the only definite cycles are those with diagonal distance two explicitly.
For this we list all the $120$ $2$-standard consecutive cycles as a union of fifteen orbits each containing eight cycles under the  cyclic shift action of $\Z/8\Z$.
The cycles in the first eight orbits $1,\ldots,8$ have diagonal distance two and satisfy the conditions of Theorem~\ref{theorem:DefIndefRegions} in A(a),A(b).  The cycles in the remaining seven orbits $9,\ldots,15$ satisfy the conditions of Theorem~\ref{theorem:DefIndefRegions}(B). The cycles in these orbits do not have diagonal distance two. We consider the one standard consecutive cycle $(12345678)$ on a circle in anticlockwise cyclic manner.
\begin{enumerate}
\item Move element $i$ forward (anticlockwise) by one position in $(12345678)$ for $1\leq i\leq 8$.
This is also same as moving another element $i$ forward by six positions in $(12345678)$  for $1\leq i\leq 8$.\\ $(13456782),(13245678),(12435678),(12354678),\\(12346578),(12345768),(12345687),(18234567)$.

\item Move element $i$ forward by two positions in $(12345678)$ for $1\leq i\leq 8$. This is also same as moving another element $i$ forward by one position in $(12345678)$ and swapping $(i-1),(i+1)$ considered cyclically for $1\leq i\leq 8$. They are given as: \\ $(14567823),(13425678),(12453678),(12356478),\\(12346758),(12345786),(17234568),(12834567)$.

\item Move element $i$ forward by three positions in $(12345678)$ for $1\leq i\leq 8$.\\ $(15678234),(13452678),(12456378),(12356748),\\ (12346785),(16234578),(12734568),(12384567)$.

\item Move element $i$ forward by four positions in $(12345678)$ for $1\leq i\leq 8$.\\ $(16782345),(13456278),(12456738),(12356784),\\ (15234678),(12634578),(12374568),(12348567)$.

\item Move element $i$ forward by five positions in $(12345678)$ for $1\leq i\leq 8$.\\ $(17823456),(13456728),(12456783),(14235678),\\  (12534678),(12364578),(12347568),(12345867)$.

\item Move element $i$ forward by two positions in $(12345678)$ and swap $(i-1),(i+1)$ considered cyclically for $1\leq i\leq 8$. This is also same as moving another element $i$ forward by five positions in $(12345678)$ and swapping $(i-1),(i+1)$ considered cyclically for $1\leq i\leq 8$. They are given as: \\ $(14567283),(14256783),(14253678),(12536478),\\ (12364758),(12347586), (17234586),(17283456)$.

\item Move element $i$ forward by three positions in $(12345678)$ and swap $(i-1),(i+1)$ considered cyclically for $1\leq i\leq 8$.\\ $(15672834),(14526783),(14256378),(12536748),\\ (12364785),(16234758),(12734586),(17238456)$.

\item Move element $i$ forward by four positions in $(12345678)$ and swap $(i-1),(i+1)$ considered cyclically for $1\leq i\leq 8$.\\ $(16728345),(14562783),(14256738),(12536784),\\(15236478),(12634758),(12374586),(17234856)$.

\item The following cycles satisfy the condition of Theorem~\ref{theorem:DefIndefRegions}(B). For example the cycle $(12563478)$ contains the sub-cycle $(125634)$.\\
$(12563478),(12367458),(12347856),(16723458),\\ (12783456),(14567238),(12567834),(14523678)$.

\item The following cycles satisfy the condition of Theorem~\ref{theorem:DefIndefRegions}(B). For example the cycle $(12563748)$ contains the sub-cycle $(125634)$.\\
$(12563748),(12367485),(16234785),(16273458),\\ (12738456),(15672384),(15267834),(14526378)$.

\item The following cycles satisfy the condition of Theorem~\ref{theorem:DefIndefRegions}(B). For example the cycle $(12567348)$ contains the sub-cycle $(125634)$.\\
$(12567348),(12367845),(15623478),(12673458),\\ (12378456),(15672348),(12678345),(14562378)$.

\item The following cycles satisfy the condition of Theorem~\ref{theorem:DefIndefRegions}(B). For example the cycle $(12567384)$ contains the sub-cycle $(125634)$.\\ $(12567384),(15236784), (15263478),(12637458),\\ (12374856),(16723485),(16278345),(14562738)$.

\item The following cycles satisfy the condition of Theorem~\ref{theorem:DefIndefRegions}(B). For example the cycle $(12563784)$ contains the sub-cycle $(125634)$.\\ $(12563784),(15236748),(12634785),(16237458),\\ (12734856),(16723845),(15627834),(14526738)$.

\item The following cycles satisfy the condition of Theorem~\ref{theorem:DefIndefRegions}(B). For example the cycle $(12637485)$ contains the sub-cycle $(126745)$ locally ignoring $3,8$.\\ $(12637485),(16237485),(16273485),(16273845),\\ (15627384),(15267384),(15263784),(15263748)$.

\item The following cycles satisfy the condition of Theorem~\ref{theorem:DefIndefRegions}(B). For example the cycle $(12673845)$ contains the sub-cycle $(126745)$ locally ignoring $3,8$.\\ $(12673845),(15623784),(15267348),(12637845),\\ (15623748),(12673485),(16237845),(15627348)$.
\end{enumerate} 
This completes the proof of Theorem~\ref{theorem:DefIndefRegions} for $n=8$ the base case of the induction step.

Now we show by induction on $k=n\geq 9$ that if the cycle is indefinite then the diagonal distance is not two and satisfies the criterion given in Theorem~\ref{theorem:DefIndefRegions}(B).

Using Lemma~\ref{lemma:TwoStandardExtension}, we first consider cycles of diagonal distance two and hence definite cycles for $k=n-1\geq 8$ of the form $(1=a_1a_2\ldots a_{n-1})$ which satisfy one of the following properties.
\begin{enumerate}[label=(\alph*)]
\item The cycle contains $2$ and $(n-1)$ consecutively with $(n-1)$ first and $2$ next (has diagonal distance two). 
\item The cycle contains $2$ and $(n-1)$ consecutively with $2$ first and $(n-1)$ next (has diagonal distance two). 
\item The cycle contains $1$ and $(n-2)$ consecutively with $1$ first and $(n-2)$ next to it (which has diagonal distance two).
\item The cycle contains $1$ and $(n-2)$ consecutively with $1$ first and $(n-2)$ at the end (which has diagonal distance two).
\item The cycle contains $(n-1)$ and $(n-3)$ consecutively with $(n-1)$ first and $(n-3)$ next 
(has diagonal distance two).
\item The cycle contains $(n-3)$ and $(n-1)$ consecutively with $(n-3)$ first and $(n-1)$ next 
(has diagonal distance two).
\item The cycle contains $1$ and $3$ consecutively with $1$ first and $3$ next to it (which has diagonal distance two). 
\item The cycle contains $1$ and $3$ consecutively with $1$ first and $3$ at the end (which has diagonal distance two). 
\end{enumerate}
These cycles upon adding $n$ may or may not remain definite. 
The remaining definite cycles upon adding $n$ will remain definite using Lemma~\ref{lemma:TwoStandardExtension}. We will later consider indefinite $(n-1)\operatorname{-}$cycles.
We mention the various cases (i)-(xvi) and prove in each case that upon adding $n$ the resulting $2$-standard consecutive cycle will either have diagonal distance two and hence remain definite or it does not have diagonal distance two and satisfies the criterion of Theorem~\ref{theorem:DefIndefRegions} (so becomes indefinite).

\begin{enumerate}[label=(\roman*)]
\item If the cycle $(1=a_1a_2\ldots a_{n-1})$ contains $2,(n-1)$ consecutively with $(n-1)$ first and $2$ next (has diagonal distance two) then the cycle is of the following form  
\equ{(1j\ldots (n-2)(n-1)23 \ldots (j-1))\text{ for some }4\leq j \leq n-2 \text{ or }}
\equ{\text{either }(134\ldots (n-1)2)\text{ or } (1(n-1)23\ldots(n-2)).}
It is clear that upon adding $n$ to $(134\ldots (n-1)2)$ the resulting $2$-standard consecutive cycle has diagonal distance two and hence definite. If we add $n$ in between $(n-1)$ and $2$ or just next to $2$ in any of these the resulting $2$-standard consecutive cycle also has diagonal distance two and hence definite. If we add $n$ to $(1(n-1)23\ldots(n-2))$ anywhere after $3$ then the resulting $2$-standard consecutive cycle also has diagonal distance two and hence definite. If we add $n$ to $(1j\ldots (n-2)(n-1)23\ldots (j-1))$ with $4\leq j\leq n-2$, anywhere after $3$ then the resulting $2$-standard consecutive cycle does not have diagonal distance two and is indefinite because it has $(23n1(n-2)(n-1))$ as a sub-cycle which satisfies the criterion of Theorem~\ref{theorem:DefIndefRegions}(B).
\item $2,(n-1)$ appear in this order next to $1$ as follows.
Now consider the cycle
\equ{\big(12(n-1)34\ldots(n-4)(n-3)(n-2)\big).} If we add $n$ anywhere after $(n-1)$ and before $(n-3)$ the resulting $2$-standard cycle does not have diagonal distance two and is indefinite because it has $(12(n-1)n(n-3)(n-2))$ as a sub-cycle which satisfies the criterion of Theorem~\ref{theorem:DefIndefRegions}(B).
If we add $n$ in between $(n-3)$ and $(n-2)$ or after $(n-2)$ then the cycle is definite and has diagonal distance two.

\item $2,(n-1)$ appear in this order in the middle as follows.
Now consider for some $3\leq a \leq n-4$ the cycle 
\equ{(1(a+2)\ldots (n-2)2(n-1)3 \ldots a(a+1)).} If we add $n$ anywhere between $(n-1)$ and $a$, then the resulting $2$-standard cycle does not have diagonal distance two and is indefinite because it has $(12(n-1)na(a+1))$ as a sub-cycle which satisfies the criterion of Theorem~\ref{theorem:DefIndefRegions}(B). If we add $n$ in between $a$ and $a+1$ and obtain the $2$-standard consecutive cycle $(15\ldots (n-2)2(n-1)3n4)$ for $a=3$ then it does not have diagonal distance two and is indefinite because it has $(23n156)$ as a sub-cycle which satisfies the criterion of Theorem~\ref{theorem:DefIndefRegions}(B) 
since $n\geq 8$. This argument does not work for $n=7$. This is because $(152634)$ has diagonal distance two for $k=6$ and if we add $n=7$ in between $(n-1)=6$ and $4$ we get $(1526374)$ which does not have $(237156)$ as a sub-cycle. In fact, $(1526374)$ does not have diagonal distance two and also does not satisfy the criterion of Theorem~\ref{theorem:DefIndefRegions}(B). It is a definite cycle for $n=7$ and this is the only exception phenomenon.
If we add $n$ after $(a+1)$ or for $a>3$ we add $n$ in between $a$ and $(a+1)$ then the resulting $2$-standard cycle does not have diagonal distance two and is indefinite because it has $(34n1(n-2)(n-1))$ as a sub-cycle which satisfies the criterion of Theorem~\ref{theorem:DefIndefRegions}(B). 

\item $2,(n-1)$ appear in this order at the last but one or at the end positions as follows.
Now consider the cycles 
\equ{(14\ldots (n-2)2(n-1)3) \text{ and }(134\ldots (n-2)2(n-1)).}
These cycles are definite as $1,3$ appear consecutively and has diagonal distance two.

\item $1,(n-2)$ appear in the beginning and $(n-1)$ appears at the end as follows.
Now consider the cycle
\equ{\big(1(n-2)23\ldots(n-3)(n-1)\big).} $n$ can only be added at the end and the resulting $2$-standard cycle is definite and has diagonal distance two and it has $(n-3),(n-1)$ as consecutive.

\item $1,(n-2)$ appear in the beginning and $(n-1)$ appears at the last but one position as follows.
Now consider the cycle
\equ{\big(1(n-2)23\ldots(n-4)(n-1)(n-3)\big).} If $n$ is added at the end then the
the resulting $2$-standard cycle is definite and has diagonal distance two and it has $(n-3),(n-1)$ as consecutive. If $n$ is added in between $(n-1)$ and $(n-3)$ then the resulting $2$-standard cycle does not have diagonal distance two and is indefinite because it has $(23(n-1)n(n-3)(n-2))$ as a sub-cycle which satisfies the criterion of Theorem~\ref{theorem:DefIndefRegions}(B).

\item $1,(n-2)$ appear in the beginning and $(n-1)$ appears in between $3$ and $(n-4)$ as follows.
Now consider the cycle
\equ{\big(1(n-2)23\ldots(n-1)\ldots(n-4)(n-3)\big).}
If we add $n$ at the end then the resulting $2$-standard cycle does not have diagonal distance two and is indefinite because it has $((n-4)(n-3)n1(n-2)(n-1))$ as a sub-cycle which satisfies the criterion of Theorem~\ref{theorem:DefIndefRegions}(B). If we add $n$ after $(n-1)$ and before $(n-3)$ then the resulting $2$-standard cycle does not have diagonal distance two and is indefinite because it has $(23(n-1)n(n-3)(n-2))$ as a sub-cycle which satisfies the criterion of Theorem~\ref{theorem:DefIndefRegions}(B).

\item $1,(n-2)$ appear in the beginning and $(n-1)$ appears in between $2$ and $3$ as follows. 
Now consider the cycle
\equ{\big(1(n-2)2(n-1)34\ldots(n-4)(n-3)\big).}
If we add $n$ anywhere after $4$ then the resulting $2$-standard cycle does not have diagonal distance two and is indefinite because it has $(34n1(n-2)(n-1))$ as a sub-cycle which satisfies the criterion of Theorem~\ref{theorem:DefIndefRegions}(B). 
If we add $n$ in between $(n-1)$ and before $4$ then the resulting $2$-standard cycle does not have diagonal distance two and is indefinite because it has $(12(n-1)n45)$ (Note: $n\geq 8$) as a sub-cycle which satisfies the criterion of Theorem~\ref{theorem:DefIndefRegions}(B). For $n=7$, this argument does not work.
This is because $(152634)$ has diagonal distance two for $k=6$ and if we add $n=7$ in between $(n-1)=6$ and $4$ we get $(1526374)$ which does not have $(126745)$ as a sub-cycle. In fact $(1526374)$ does not have diagonal distance two and also does not satisfy the criterion of Theorem~\ref{theorem:DefIndefRegions}(B). It is a definite cycle for $n=7$ and this is the only exception phenomenon.

\item $1,(n-2)$ appear in the beginning and $(n-1)$ appears in between $(n-2)$ and $2$ as follows. 
Now consider the cycle
\equ{\big(1(n-2)(n-1)23\ldots(n-4)(n-3)\big).}
If we add $n$ anywhere after $3$ then the resulting $2$-standard cycle does not have diagonal distance two and is indefinite because it has $(23n1(n-2)(n-1))$ as a sub-cycle which satisfies the criterion of Theorem~\ref{theorem:DefIndefRegions}(B). 
If we add $n$ in between $2$ and $3$ or in between $(n-1)$ and $2$ then the
the resulting $2$-standard cycle has diagonal distance two and is definite with $2,n$ as consecutive.
\item $1,(n-2)$ appear consecutively with $1$ first and $(n-2)$ at the end as follows.
Now consider the cycle 
\equ{\big(1 \ldots (n-1) \ldots (n-2)\big).}
If we add $n$ at the end after $(n-2)$ or just before $(n-2)$ then the resulting two standard cycle has diagonal distance two and is definite. If we add $n$ after $(n-1)$ and before $(n-2)$ then the cycles
$(1(n-1)2 \ldots  n \ldots (n-3)(n-2)),(1(n-1)n2 \ldots (n-3)(n-2))$ has diagonal distance two and are definite. The cycle $(12 \ldots (n-1) \ldots n \ldots (n-3)(n-2))$ does not have diagonal distance two and is indefinite because it has $(12(n-1)n(n-3)(n-2))$ as a sub-cycle which satisfies the criterion of Theorem~\ref{theorem:DefIndefRegions}(B).  
\item $(n-1),(n-3)$ appear consecutively in this order and $(n-2)$ appears after $(n-3)$.
Now consider the cycle 
\equ{\big(123\ldots(n-1)(n-3)(n-2)\big).}
If we add $n$ after $(n-3)$ then the resulting $2$-standard cycle has diagonal distance two and is definite. If we add $n$ in between $(n-1)$ and $(n-3)$ then the resulting $2$-standard cycle does not have diagonal distance two and is indefinite because it has $(12(n-1)n(n-3)(n-2))$ as a sub-cycle which satisfies the criterion of Theorem~\ref{theorem:DefIndefRegions}(B). 
\item $(n-1),(n-3)$ appear consecutively in this order and $(n-2)$ appears before $(n-3)$.
Now consider the cycle 
\equ{\big(1\ldots (n-2)\ldots(n-1)(n-3)\big).}
If we add $n$ after $(n-3)$ then the resulting $2$-standard cycle has diagonal distance two and is definite. If we add $n$ in between $(n-1)$ and $(n-3)$ then the cycles $\big(1\ldots (n-5)(n-4)(n-2)(n-1)n(n-3)\big)$ and $\big(1\ldots (n-5)(n-2)(n-4)(n-1)n(n-3)\big)$ has diagonal distance two and are definite. The cycle $\big(1\ldots (n-2) \ldots (n-5)(n-4)(n-1)n(n-3)\big)$ does not have diagonal distance two and is indefinite because it has $((n-5)(n-4)(n-1)n(n-3)(n-2))$ as a sub-cycle which satisfies the criterion of Theorem~\ref{theorem:DefIndefRegions}(B).
\item $(n-3),(n-1)$ appear consecutively in this order and $(n-2)$ appears before $(n-3)$. Now consider the cycle
\equ{\big(1\ldots (n-2)\ldots(n-3)(n-1)\big).} 
Now $n$ has to be added at the end and the resulting $2$-standard cycle has diagonal distance two and is definite.
\item $(n-3),(n-1)$ appear consecutively in this order and $(n-2)$ appears after $(n-3)$.
Now consider the cycle 
\equ{\big(1\ldots (n-1)(n-3)(n-2)\big).}
If $n$ is added after $(n-3)$ then the resulting $2$-standard cycle has diagonal distance two and is definite. If $n$ is added in between $(n-1)$ and $n-3$ then the resulting $2$-standard cycle does not have diagonal distance two and is indefinite because it has $(12(n-1)n(n-3)(n-2))$ as a sub-cycle which satisfies the criterion of Theorem~\ref{theorem:DefIndefRegions}(B).
\item $1,3$ appear consecutive with $1$ first and $3$ just next to it.
Now consider the cycle 
\equ{\big(13\ldots 2 \ldots (n-1)\big),\big(13\ldots (n-1)2\big).}   
Now $n$ must be added at the end in $\big(13\ldots 2 \ldots (n-1)\big)$ and the resulting $2$-standard cycle has diagonal distance two and is definite. In $\big(13\ldots (n-1)2\big)$, $n$ must be added after $(n-1)$ and the resulting $2$-standard cycle has diagonal distance two and is definite.
\item $1,3$ appear consecutive with $1$ first and $3$ at the end.
Now consider the cycle 
\equ{\big(1\ldots 2 \ldots (n-1)3\big).} 
If $n$ is added in between $(n-1)$ and $3$ then the resulting $2$-standard cycle has diagonal distance two and is definite. If $n$ is added after $3$ at the end then the cycles $(1425\ldots (n-1)3n),(1245\ldots (n-1)3n)$ have diagonal distance two and are definite. The cycle $(145 \ldots 2\ldots (n-1)3n)$ does not have diagonal distance two and is indefinite because it has $(23n145)$ as a sub-cycle which satisfies the criterion of Theorem~\ref{theorem:DefIndefRegions}(B).

\end{enumerate}
If a $2$-standard consecutive $(n-1)\operatorname{-}$cycle $(1=a_1a_2\ldots a_{n-1})$ is already indefinite satisfying the criterion of Theorem~\ref{theorem:DefIndefRegions}(B) then it satisfies the criterion of Theorem~\ref{theorem:DefIndefRegions}(B) after adding $n$ and obtain a $2$-standard consecutive $n\operatorname{-}$cycle. We will now show that it does not have diagonal distance two.
Let $(1=a_1\ldots a_{n-1})$ be an indefinite $(n-1)\operatorname{-}$cycle. By induction it does not have diagonal distance two. After adding $n$ we show that $n$ cannot be adjacent to $2$ or $(n-2)$. Suppose the $(n-1)\operatorname{-}$cycle is given by $(1=a_1\ldots (n-2) \ldots (n-1) \ldots a_{n-1})$. Then $n$ should appear after $(n-1)$ unless the cycle is $(12\ldots (n-3)n(n-2)(n-1))$ or $(12\ldots (n-2)n(n-1))$, which is impossible. Hence $n$ and $(n-2)$ cannot be adjacent. 
Suppose $(n-2)$ appears after $(n-1)$ in the indefinite $(n-1)\operatorname{-}$cycle $(1=a_1\ldots a_{n-1})$. Then it is given by $(1=a_1\ldots (n-1) \ldots (n-2)=a_{n-1})$, which has diagonal distance two. Hence a contradiction. By a similar reasoning $n$ cannot appear adjacent to $2$. So the resulting $2$-standard consecutive $n\operatorname{-}$cycle does not have diagonal distance $2$.
This completes the proof of main Theorem~\ref{theorem:DefIndefRegions}.
\end{proof}
\section{\bf{The Main Theorem in terms of Matroids}}
We state and prove the theorem.

\begin{theorem}
\label{theorem:Reducibility}
Let $n\geq 3$ be a positive integer.
\begin{enumerate}
\item 
The map $(\gp_e)_{\mid_{\mcl{R}_{gen}(D_n)}}:\mcl{R}_{gen}(D_n)\lra \mcl{R}_{gen}(C_n=D_n\bs p)$ induced by deletion of the point $p$ is surjective if and only if the $2$-standard consecutive cycle associated to the region/residence of $p=n+1$ is a definite cycle as given in Theorem~\ref{theorem:DefIndefRegions}(A).
\item The map $\gp_e:\mcl{R}(D_n)\lra \mcl{R}(C_n=D_n\bs p)$ is surjective if and only if the $2$-standard consecutive cycle associated to the region/residence of $p=n+1$ is a definite cycle as given in Theorem~\ref{theorem:DefIndefRegions}(A).
\end{enumerate} 
\end{theorem}
\begin{proof}
We prove $(1)$.
Clearly the region/cycle $R$ is definite as given in Theorem~\ref{theorem:DefIndefRegions}(A) if and only if it occurs in every convex $n$-gon with generic diagonals. Hence  $p$ is in such a region $R$ if and only if the map $(\gp_e)_{\mid_{\mcl{R}_{gen}(D_n)}}:\mcl{R}_{gen}(D_n)\lra \mcl{R}_{gen}(C_n=D_n\bs p)$ is surjective.

We prove $(2)$. Suppose $\gp_e:\mcl{R}(D_n)\lra \mcl{R}(C_n)$ is surjective. Then $\mcl{R}_{gen}(D_n)\sbq \gp_e^{-1}(\mcl{R}_{gen}(C_n))\sbq \mcl{R}(D_n)$ and $\gp_e: \gp_e^{-1}(\mcl{R}_{gen}(C_n))\lra \mcl{R}_{gen}(C_n)$ is surjective. Since the fibre is associated to a convex open set which is the residence of a point $p$, we can also choose $p$ in a further general position in the same convex open set. So we obtain that actually the following map  
\equ{(\gp_e)_{\mid_{\mcl{R}_{gen}(D_n)}}:\mcl{R}_{gen}(D_n)\lra \mcl{R}_{gen}(C_n=D_n\bs p)}
is surjective. Now we use $(1)$ to conclude that $R$ is a definite region and its associated cycle is a definite cycle as described in Theorem~\ref{theorem:DefIndefRegions}(A).

Conversely the regions corresponding to definite cycles as described in Theorem~\ref{theorem:DefIndefRegions}(A) appear in the class $\mcl{C}_n$ of all convex $n$-gons, not just the class $(\mcl{C}_n)_{gen}$  of convex $n$-gons with generic diagonals. Note that for $n=7$ the region corresponding to the definite cycle $(1526374)$ also occurs in the class of all convex $n$-gons. Hence $(2)$ follows.
\end{proof}

\end{document}